\newtheorem{theorem}{Theorem}[section]
\newtheorem{lemma}{Lemma}[section]
\newtheorem{remark}{Remark}[section]
\numberwithin{equation}{section}
\begin{document}
 \title{Infinitely many non-radial sign-changing solutions for   a Fractional Laplacian equation with critical nonlinearity}
\author{Fei Fang\footnote{Corresponding author. E-mail address: fangfei68@163.com\ \  Tel: +8613366101808}
\\School of Mathematical Sciences, Peking  University \\Beijing, 100871,  China}
 \date{ }
 \maketitle

\noindent \textbf{\textbf{Abstract:}} In this work, the following fractional Laplacian problem  with  pure  critical nonlinearity is considered
\begin{equation*}
\left\{
\begin{array}{ll}
(-\Delta)^{s} u=|u|^{\frac{4s}{N-2s}}u, &\mbox{in}\    \mathbb{R}^N, \\
u\in \mathcal{D}^{s,2}(\mathbb{R}^N),
\end{array}
\right.
\end{equation*}
where $s\in (0,1)$,  $N$ is a  positive integer with $N\geq 3$, $(-\Delta)^{s}$ is the fractional Laplacian  operator. We will prove that
this problem has infinitely many non-radial sign-changing solutions.

\section{Introduction}
We tackle the following Fractional Laplacian problem with pure critical nonlinearity
\begin{equation}
\left\{
\begin{array}{ll}
(-\Delta)^{s} u=|u|^{p-1}u, &\mbox{in}\    \mathbb{R}^N, \\
u\in D^{s,2} (\mathbb{R}^N),
\end{array}
\right.    \label{P2}
\end{equation}
where $s\in (0,1)$, $ p=2^{\ast}-1$, $2^{\ast}=\frac{2N}{N-2s}$, $N$ is a  positive integer with $N\geq 3$, and the space $D^{s,2}(\mathbb{R}^N)$  is
defined as the completion of $C_0^{\infty}(\mathbb{R}^N)$ under the norm:
$$\|u\|_{\mathcal{D}^{s,2}(\mathbb{R}^N)}:=\int_{\mathbb{R}^N}|\xi|^{2s}\hat{u}d\xi=\int_{\mathbb{R}^N}|(-\Delta)^{\frac{s}{2}}u(x)|^2dx,$$
where $\ \hat{ }\ $  is Fourier transform. $(-\Delta)^{s}$ is a Fractional Laplacian operator  which  is  defined as a pseudo-differential operator:
$$\widehat{(-\Delta)^{s}} u(\xi))=|\xi|^{2s}\hat{u}(\xi).$$
If $u$ is smooth enough, it can also be computed by the following singular integral:
$$(-\Delta)^{s} u(x)=C_{N,s} P.V. \int_{\mathbb{R}^N}\frac{u(x)-u(y)}{|x-y|^{N+2s}}dy.$$
Here P.V. is the principal value and $C_{N,s}$ is a normalization constant.  The operator $(-\Delta)^{s}$
can be seen as the infinitesimal generators of L\'{e}vy stable diffusion processes (see \cite{r9}). This
operator arises in several areas such as physics, biology, chemistry and finance (see \cite{r9,r10}). In
recent years, The fractional Laplacians have recently attracted much research interest, and there are
a lot of results in the literature on the existence of
such solutions, e.g.,  ( \cite{r12,r13,r14,r15,r16,r17,r11, r4, r7,r8}) and the references therein.
In the remarkable work of Caffarelli and
Silvestre \cite{r15}, this nonlocal operator   can be defined by
the following Dirichlet-to-Neumann map:
$$ (-\Delta)^{s} u(x) =-\frac{1}{k_s}\lim_{y\rightarrow0^{+}}\frac{\partial w}{\partial y}(x,y),$$
where, $k_s= (2^{1-2s}\Gamma(1-s)/\Gamma(s))$ and $w$  solves the boundary value problem
\begin{equation*}
  \left\{\begin{array}{ll}
           -\mbox{div}(y^{1-2s}\nabla w)=0 & \ \mbox{in}\ \mathbb{R}_{+}^{N+1}, \\
           w(x, 0)=u & \ \mbox{on}\ \mathbb{R}^{N}.
         \end{array}
  \right.
\end{equation*}
Caffarelli and his coauthors  \cite{r13, r14} investigated free boundary
problems of a fractional Laplacian. The  operator was studied  by  Chang and Gonz\'{a}lez \cite{r17} in conformal
geometry. Silvestre \cite{r22} obtained some regularity results for the obstacle problem of the
fractional Laplacian. Recently, Fractional  Schr\"{o}dinger equations with respect to wave standing waves were studied in
 \cite{r11,  r24, r25, r26,r19,r27}.
  Very recently,  the singularly perturbed
problem of fractional Laplacian was considered  by D\'{a}vila, del Pino and Wei \cite{r23},
and  they recovered various existence results already known for the case $s=1$.

Problem (\ref{P2}) arises from looking for a solution for the following fractional Nirenberg problem,
\begin{equation}\label{P1}
P_{s} (u)=|u|^{\frac{4s}{N-2s}}u,\ \ \mbox{in}\    \mathbb{S}^N,
\end{equation}
where $(\mathbb{S}^N, g_{\mathbb{S}^N} )$,\ $n\geq 2$, be the standard sphere in $\mathbb{\mathbb{R}}^{N+1}$,
$P_s$ is an intertwining operator. The reader is referred to \cite{r7, r8} for more details on fractional Nirenberg problem. Similar to
 the case of $s=1$, using the stereo-graphic projection, problem (\ref{P1}) can be reduced to  problem (\ref{P2}).


This idea of this paper is motivated by the recent papers \cite{r2, r18}, where infinitely many solutions to the Yamabe
problem  and the Yamabe probelm of  polyharmonic  operator were constructed, respectively.

Our main result in this paper can be stated as follows:

\begin{theorem}\label{t2}
 Assume that $N\geq 3$, then problem \eqref{P2} has infinitely many non-radial sign-changing solutions.
\end{theorem}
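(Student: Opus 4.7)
The plan is to adapt the finite-dimensional reduction / symmetric bubble gluing scheme of del~Pino--Musso--Pacard--Pistoia (for the Yamabe problem) to the fractional setting, following the polyharmonic analog of \cite{r18}. The idea is to build solutions that look like one large positive bubble at the origin minus $k$ small bubbles arranged periodically on a circle, and then let $k\to\infty$ to obtain infinitely many distinct non-radial sign-changing solutions.

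First I would fix the building blocks. The positive solutions to \eqref{P2} are the Aubin--Talenti bubbles
\[
U_{\mu,\xi}(x) = c_{N,s}\Bigl(\tfrac{\mu}{\mu^2+|x-\xi|^2}\Bigr)^{(N-2s)/2},
\]
classified by Chen--Li--Ou. For a large integer $k$, put $\xi_j = r(\cos\tfrac{2\pi j}{k},\sin\tfrac{2\pi j}{k},0,\ldots,0)$, $j=1,\ldots,k$, and form the ansatz
\[
W_k(x) = U_{1,0}(x) - \sum_{j=1}^{k} U_{\mu,\xi_j}(x),
\]
with two parameters $\mu=\mu_k$ and $r=r_k$ to be determined. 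Impose the symmetry group $G = D_k\times O(N-2)$, where $D_k$ is the dihedral group acting on the first two coordinates and $O(N-2)$ rotates the last $N-2$ coordinates. Working in the $G$-invariant Sobolev subspace of $\mathcal{D}^{s,2}(\mathbb{R}^N)$ kills most of the kernel of the linearized operator and, by symmetric criticality, ensures that critical points are genuine weak solutions of \eqref{P2}.

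Next I would set up the Lyapunov--Schmidt reduction. Write $u = W_k + \phi$ with $\phi$ in the $G$-invariant orthogonal complement of the two natural finite-dimensional degeneracies (the scaling mode $\partial_\mu U_{\mu,\xi_j}$ and the radial-translation mode $\partial_r U_{\mu,\xi_j}$, summed over $j$ to make them $G$-invariant). The linearization is
\[
L_k\phi = (-\Delta)^s\phi - p|W_k|^{p-1}\phi.
\]
I would work in weighted $L^\infty$ norms of the form $\|\phi\|_{*} = \sup_x \rho_k(x)^{-1}|\phi(x)|$ with weight $\rho_k(x) = (1+|x|)^{-(N-2s)/2-\sigma} + \sum_j (\mu + |x-\xi_j|)^{-(N-2s)/2-\sigma}$ for a small $\sigma>0$, which is the standard choice for bubbles and which controls nonlocal convolution estimates well via the decay of the Riesz kernel $|x|^{2s-N}$ of $(-\Delta)^{-s}$. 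In these norms I would prove uniform-in-$k$ invertibility of $L_k$ on the $G$-invariant orthogonal complement, and then solve for $\phi = \phi(r,\mu)$ by contraction mapping in a ball of size $O(\mu^{(N-2s)/2})$.

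Finally I would carry out the reduction to a two-variable problem. Substituting $u = W_k + \phi(r,\mu)$ into the energy produces a function $F_k(r,\mu)$ whose critical points give solutions. Its leading expansion has three terms: the self-energy of the central bubble (a constant), a central--outer interaction of order $k\,\mu^{(N-2s)/2} r^{-(N-2s)}$, and an outer--outer interaction of order $k\sum_{j\ne 1}|\xi_1-\xi_j|^{2s-N}\mu^{N-2s} \sim k^{N-2s+1} r^{2s-N}\mu^{N-2s}$ up to combinatorial constants computed from $\sum_{j=1}^{k-1}|\sin(\pi j/k)|^{2s-N}$. Balancing in $(r,\mu)$ yields an asymptotic regime $\mu_k \sim k^{-(N-2s)/(N-2-2s)}$, $r_k \sim k^{\alpha}$ (with $\alpha$ fixed by the balance), and a $C^0$-stable strict local min/max of $F_k$ in an annulus, hence a critical point for every large $k$. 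Distinct $k$ give solutions with distinct symmetry groups, hence infinitely many non-radial sign-changing solutions.

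The main obstacle will be Step~3: uniformly-in-$k$ invertibility of $L_k$ in weighted $L^\infty$ on the $G$-invariant complement. Nonlocality of $(-\Delta)^s$ precludes the usual localization tricks, so concentration-compactness contradiction arguments must be combined with sharp pointwise bounds on the Riesz potential $(-\Delta)^{-s}(|W_k|^{p-1}\phi)$; the cleanest route is to pass to the Caffarelli--Silvestre extension and carry out the blow-up analysis on $\mathbb{R}^{N+1}_+$ with the weight $y^{1-2s}$, where classical elliptic techniques (Moser iteration and the Liouville-type classification of entire solutions of the extended equation) apply, and then translate back to $\mathbb{R}^N$. The remaining computations (interaction integrals and reduced energy expansion) are tedious but parallel the polyharmonic case in \cite{r18}.
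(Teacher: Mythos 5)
Your overall strategy (a large positive bubble minus $k$ small bubbles on a ring, symmetry reduction, linear theory in weighted norms, then a finite-dimensional reduction) is the right family of ideas, but as written it has two concrete gaps, and the second one is exactly where the paper's construction departs from yours. First, the kernel/parameter count does not close. In your $G$-invariant class ($G=D_k\times O(N-2)$ plus evenness) the linearization at $W_k$ retains three approximate kernel directions, not two: besides the symmetrized dilation and radial-translation modes of the ring of small bubbles, there is the dilation mode $Z_0=x\cdot\nabla U_{1,0}+\frac{N-2s}{2}U_{1,0}$ of the central bubble, which is $G$-invariant and is not removed by your projection. With only the two parameters $(r,\mu)$ you are left with one Lagrange multiplier (along $Z_0$) that no parameter can annihilate; you would either have to add the central scale as a third parameter and then handle the resulting degeneracy coming from the exact dilation invariance of \eqref{P2}, or impose an additional symmetry that excludes $Z_0$. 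The paper takes the second route: the small bubbles are placed at $|\xi_j|=\sqrt{1-\mu^2}$ precisely so that the ansatz is invariant under the Kelvin inversion \eqref{e1} (Lemma \ref{l21}), and the correction is sought in the class invariant under rotations, reflections and this inversion. Those symmetries make the orthogonality conditions $\int_{\mathbb{R}^N}hv_l=0$, $l=1,\dots,N+1$, hold automatically (see the arguments in Lemmas \ref{l41} and \ref{l43}), the ring radius is no longer a free parameter, and the whole reduction collapses to a single scalar equation $c_{N+1}(\delta)=0$ in the one parameter $\delta$, solved by continuity and a sign change via the expansion of Lemma \ref{l42} --- there is no two-variable reduced-energy min-max at all, and no need for the extension/blow-up machinery you propose for the linear theory (the paper gets the weighted bound of Lemma \ref{l31} from the nondegeneracy result of \cite{r4}, the regularity estimates of \cite{r5}, and a Kelvin transform of the equation).

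Second, your asymptotic regime is not the correct one, and this is where your min-max step would actually fail. Balancing the central--outer interaction, of size $k\,\mu^{(N-2s)/2}(1+r^2)^{-(N-2s)/2}$, against the outer--outer interaction, of size $k(\mu k)^{N-2s}r^{-(N-2s)}$, forces $\mu\sim k^{-2}$ up to a bounded factor, with $r$ of order one; your formula $\mu_k\sim k^{-(N-2s)/(N-2-2s)}$ does not come from this balance and is meaningless for $N=3$, $s\ge 1/2$. Moreover, in your regime $r_k\sim k^{\alpha}\to\infty$ both interaction terms scale like $r^{-(N-2s)}$, so the leading order of the reduced energy is degenerate in $r$: there is no strict, $C^0$-stable critical point in an annulus to be found at that order, and pinning down $r$ would require either a finer expansion or, as the paper does, the Kelvin-symmetry constraint $r=\sqrt{1-\mu^2}$ which removes $r$ from the problem altogether. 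So the proposal, as it stands, is missing the device (extra inversion symmetry, or a third parameter plus the dilation identity) that makes the reduction solvable, and its quantitative regime must be corrected before the fixed-point and reduced-energy estimates can be carried out.
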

We will prove Theorem \ref{t2} by proving the following result:
\begin{theorem}\label{t1}
Let $N\geq 3$ and write $\mathbb{R}^N=\mathbb{C}\times \mathbb{R}^{N-2}$ and $\xi_j=\sqrt{1-\mu^2}(e^{\frac{2\pi(j-1)}{k}i}, 0, \dots, 0), j=1,\cdots, k.$
Then for any sufficiently large $k$ there is finite energy solution of the form
$$u_k(x)=U(x)-\sum_{j=1}^k\mu_k^{-\frac{n-2s}{2}}U(\mu_k^{-1}(x-\xi_j))+o(1),$$
where $$\mu_k=\delta_k^{\frac{2}{N-2s}}k^{-3}, U_{x,\Lambda}(y)= c_{N,s}\left(\frac{\Lambda}{1+\Lambda^2|y-x|^2}\right)^{\frac{N-2s}{2}}, c_{N, s}>0, \Lambda>0, x\in \mathbb{R}^N$$
and $o(1)\rightarrow 0$ uniformly as $k\rightarrow \infty$, $\delta_k$ is a positive number which only depends  on $k$.
\end{theorem}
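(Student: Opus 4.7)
The plan is to carry out a Lyapunov--Schmidt finite-dimensional reduction around the $(k+1)$-bubble configuration displayed in the statement, working in the Caffarelli--Silvestre extension so that $(-\Delta)^s$ becomes the (weighted) Dirichlet-to-Neumann map for a local degenerate elliptic problem on $\mathbb{R}_+^{N+1}$. First, I would pass to the extended functional $I(w)=\tfrac{1}{2k_s}\int_{\mathbb{R}_+^{N+1}}y^{1-2s}|\nabla w|^2\,dx\,dy-\tfrac{1}{2^*}\int_{\mathbb{R}^N}|w(\cdot,0)|^{2^*}\,dx$, whose critical points restricted to the trace give solutions of \eqref{P2}. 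The approximate solution is
$$W_{\mu,k}(x)=U(x)-\sum_{j=1}^{k}U_{\mu,\xi_j}(x),\qquad U_{\mu,\xi_j}(x)=\mu^{-\frac{N-2s}{2}}U\!\left(\mu^{-1}(x-\xi_j)\right),$$
together with its extension. The configuration $\{\xi_j\}$ enjoys the cyclic symmetry of the $k$-th roots of unity in $\mathbb{C}\times\{0\}$; I would restrict the whole problem to the subspace $H_{s,k}$ of functions invariant under (i) the rotation by $2\pi/k$ in the first complex coordinate, (ii) the action of $O(N-2)$ on the last $N-2$ variables, and (iii) a Kelvin-type involution that exchanges the central and satellite bubbles and preserves $W_{\mu,k}$ itself. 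In this symmetric setting the only residual non-compact invariance is the scaling parameter $\mu$, reducing the usual $(N+2)$-dimensional critical set to one dimension.

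Next, I would write $u=W_{\mu,k}+\phi$ with $\phi\in H_{s,k}$ orthogonal (in the natural weighted inner product) to $Z_{\mu,k}:=\partial_\mu W_{\mu,k}$, and set up the projected equation
$$L_k\phi=E_k+N_k(\phi),\qquad L_k:=-\mathrm{div}(y^{1-2s}\nabla\cdot)\ \text{with boundary reaction}\ p\,W_{\mu,k}^{p-1},$$
with error term $E_k=(-\Delta)^sW_{\mu,k}-|W_{\mu,k}|^{p-1}W_{\mu,k}$ and nonlinear remainder $N_k(\phi)$. The central analytic step is a uniform invertibility estimate for $L_k$ on the orthogonal complement of $Z_{\mu,k}$, in a weighted $L^\infty$-norm adapted to the bubble decay $|x-\xi_j|^{-(N-2s)}$. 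Once this is in place, the contraction mapping principle yields a unique $\phi=\phi_{\mu,k}$ of size controlled by $\|E_k\|_*$, after which standard bootstrap shows $\|\phi_{\mu,k}\|\to 0$ as $k\to\infty$.

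Finally, I would compute the reduced functional $F_k(\mu):=I(W_{\mu,k}+\phi_{\mu,k})$ and look for critical points in the single variable $\mu$. The expansion takes the schematic form
$$F_k(\mu)=(k+1)\,I(U)-k\,A_1\,\mu^{\frac{N-2s}{2}}+k^2\,A_2\,(1-\mu^2)^{-\frac{N-2s}{2}}\!\sum_{j=2}^{k}\!|\xi_1-\xi_j|^{-(N-2s)}+\text{lower order},$$
where $A_1,A_2>0$ are explicit constants arising from the bubble self-interaction and the pair interaction computed via the Riesz-type kernel of the CS extension. Using $|\xi_1-\xi_j|\sim k^{-1}\cdot j$ for $j\lesssim k/2$, the second sum is of order $k^{N-2s}$, so the interaction balance forces the scaling $\mu=\delta_k^{2/(N-2s)}k^{-3}$ from the statement; substituting produces a function of $\delta_k$ that has a non-degenerate minimum at some $\delta_k\to\delta_\infty>0$. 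Any such critical point lifts, via the reduction, to a genuine solution, and letting $k$ vary produces the claimed infinite family.

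The main obstacle I expect is the uniform linear invertibility of $L_k$ in the symmetric space: the satellite bubbles concentrate at the scale $\mu_k\to 0$, and the non-locality of $(-\Delta)^s$ means one cannot localize the spectral analysis as in the classical Laplacian case. This requires a careful blow-up argument combining the non-degeneracy of $U$ (the Jacobi fields of the bubble being spanned by translations, dilations, and the conformal Kelvin modes) with the symmetry constraints of $H_{s,k}$ to rule out all limit kernels. The delicate balance between the $k\to\infty$ multiplicity and the $\mu_k\to0$ concentration, mediated by the fractional Green function decay $|x|^{-(N-2s)}$, is what pins down the exponent in $\mu_k=\delta_k^{2/(N-2s)}k^{-3}$.
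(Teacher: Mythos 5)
Your strategy---a Lyapunov--Schmidt reduction around the same ansatz, with the $2\pi/k$-rotation, reflection and Kelvin-type symmetries used to eliminate all kernel directions except the dilation one---shares its skeleton with the paper, but the implementation you propose is genuinely different, and the differences are where the difficulties sit. The paper never passes to the Caffarelli--Silvestre extension and never expands a reduced energy functional: it works directly with $(-\Delta)^s$ in the weighted norms \eqref{e23}--\eqref{e24}, proves a linear theory only for the linearization around a \emph{single} bubble (Lemma \ref{l31}, via Fredholm alternative plus the Ros-Oton--Serra regularity of Lemma \ref{l30} and a Kelvin transform for the decay), splits the perturbation as $\phi=\sum_j\tilde\phi_j+\psi$ with cut-offs (the inner--outer system \eqref{e45}), solves the outer and inner problems by contraction (Lemmas \ref{l41} and \ref{l43}), and treats the one remaining scalar at the level of the equation: the projection of the inner right-hand side on the dilation mode, $c_{N+1}(\delta)$, is expanded in Lemma \ref{l42} and a zero $\delta_0$ is produced by continuity and a sign change, not by locating a nondegenerate minimum of a reduced energy. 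In particular the paper never needs invertibility of the linearized operator around the full $k$-bubble configuration, which is exactly the step your scheme cannot avoid.

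That step is the first genuine gap: the uniform-in-$k$ invertibility of $L_k$ on the symmetric orthogonal complement is asserted, and your blow-up sketch would have to rule out degeneration of the constant as $k\to\infty$ bubbles accumulate on a circle at mutual distance $\sim 1/k$; nothing in the proposal does this, whereas the gluing decomposition is the paper's device for sidestepping it. Relatedly, your symmetry (iii) is misstated: the inversion $y\mapsto y/|y|^2$ does not exchange the central and satellite bubbles; precisely because $\mu^2+|\xi_j|^2=1$ it fixes every bubble (Lemma \ref{l21}), and it is this invariance that kills the remaining translation mode $v_1$ (cf.\ the computation \eqref{e435}--\eqref{e436}). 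The second gap is quantitative: your reduced-energy expansion is wrong as displayed. The interaction of two satellite bubbles of common scale $\mu$ at distance $|\xi_1-\xi_j|\gg\mu$ is of order $\mu^{N-2s}|\xi_1-\xi_j|^{-(N-2s)}$, and the combinatorial prefactor is $k$, not $k^2$; with the factor $\mu^{N-2s}$ omitted your second term is of size $k^{2}\sum_{j\geq2}|\xi_1-\xi_j|^{-(N-2s)}\sim k^{2+N-2s}$, which cannot balance $k\,\mu^{(N-2s)/2}$, so the claim that the expansion ``forces'' $\mu_k=\delta_k^{2/(N-2s)}k^{-3}$ does not follow from what you wrote. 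With the corrected terms the balance $k\,\mu^{(N-2s)/2}\sim k\,(\mu k)^{N-2s}$ selects $\mu\sim k^{-2}$ (as in the $s=1$ construction of del Pino--Musso--Pacard--Pistoia, and consistent with the $k$-powers appearing in Lemma \ref{l42}); in any case the exponent must be obtained by matching the two contributions to the dilation projection, not read off from the displayed formula, and the existence of the critical value of $\delta_k$ should be argued (a sign change plus continuity suffices, as in the paper) rather than asserted as a nondegenerate minimum.
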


\begin{remark}\label{remark1}
We believe  that the similar result should also hold for the following  critical problems with the presence of weight:
\begin{equation*}
\left\{
\begin{array}{ll}
(-\Delta)^{s} u=K(x)|u|^{\frac{N+2s}{N-2s}}, &\mbox{in}\    \mathbb{R}^N, \\
u> 0,  &\mbox{on}\    \mathbb{R}^N ,\\
u\in D^{s,2}(\mathbb{R}^N).
\end{array}
\right.
\end{equation*}
Following the idea in \cite{r3}, for appropriate  weight $K(x)$, we can also construct a sequence of non-radial positive solutions
solutions for this  problem.  When $K(x)=1$,   it was shown independently by Y. Y. Li and Chen, Li, and Ou \cite{r21, r20} that for
$s\in(0,N/2)$ and $ u \in  L_{loc}^{\frac{2N}{N-2s}}(\mathbb{R}^N) $ the equation has a unique positive solution $u(r) > 0$ (see \eqref{e00})up to scaling and translation.
\end{remark}

\begin{remark}\label{remark2}
In a recent work, W. Long, S.J. Peng and J.  Yang \cite{r19} obtained infinitely many positive  solutions  for the following subcritical equation:
\begin{equation*}
(-\Delta)^{s} u+u=k(|x|)u^{q}, \ \mbox{in}\    \mathbb{R}^N,
\end{equation*}
where $q<\frac{N+2s}{N-2s}$, $K(|x|)$ is a positive radial function, and satisfies  some asymptotic assumptions  at infinity.
\end{remark}

We organize this paper as follows. In Section 2,  we construct an approximation solution and give the estimates of the error.  Section 3  contains a
linear result.  Section 4  will devote to the detailed calculus and further thoughts on the gluing procedures. The proof the main result will be given in  last section.

\section{Approximation solution and the estimate of the error}
We start with the construction of a first approximate solution to problem  (\ref{P2}). Then we give
the precise estimate of the error.
It is well known that the radial  functions
\begin{equation}\label{e00}
U_{x,\Lambda}(y)= c_{n,s}\left(\frac{\Lambda}{1+\Lambda^2|y-x|^2}\right)^{\frac{N-2s}{2}}, c_{n, s}>0, \Lambda>0, x\in \mathbb{R}^N,
\end{equation}
are the only solutions to the problem
\begin{equation}\label{e001}
-\Delta u= u^{\frac{N-2s}{N+2s}}, u>0,\ \mbox{in}\ \mathbb{R}^N.
\end{equation}
Moreover,  the radial solution $U$ is invariant under the Kelvin type transform
\begin{equation}\label{e1}
\hat{u}(y)=|y|^{2s-N}u \left(\frac{y}{|y|^2}\right).
\end{equation}
That is, $\hat{U}=U(y)$. Problem (\ref{e001}) is invariant under the Kelvin transform (\ref{e1}) (see \cite{r1}).

Let $$w_{\mu}(y-\xi)=\mu^{-\frac{N-2s}{2}}U\left(\mu^{-1}(y-\xi)\right).$$
Then a simple algebra computation implies that:
\begin{lemma}\label{l21}
$w_{\mu}(y-\xi)$ is invariant under the Kelvin type transform (\ref{e1}) if and only if $\mu^2+|\xi|^2=1$.
\end{lemma}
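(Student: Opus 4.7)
The plan is to reduce the claim to a direct algebraic identity by writing $w_\mu(y-\xi)$ in the standard bubble form and then computing its Kelvin image explicitly.

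First I would unpack the definitions: using $U_{0,1}(z)=c_{N,s}(1+|z|^2)^{-(N-2s)/2}$, the scaling-translation gives
\begin{equation*}
w_\mu(y-\xi)=\mu^{-\frac{N-2s}{2}}U\!\left(\mu^{-1}(y-\xi)\right)=c_{N,s}\left(\frac{\mu}{\mu^2+|y-\xi|^2}\right)^{\!\frac{N-2s}{2}},
\end{equation*}
so $w_\mu(\,\cdot-\xi)=U_{\xi,\,1/\mu}$. Next I apply the Kelvin transform $\hat w(y)=|y|^{2s-N}w(y/|y|^2)$. The key computation is the algebraic identity
\begin{equation*}
\mu^2+\left|\tfrac{y}{|y|^2}-\xi\right|^{2}=\frac{A\,|y|^2-2\,y\!\cdot\!\xi+1}{|y|^2},\qquad A:=\mu^2+|\xi|^2.
\end{equation*}

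Substituting this into $\hat w$, the $|y|^{2s-N}$ factor exactly absorbs the $|y|^{2(N-2s)/2}$ that emerges in the numerator, leaving
\begin{equation*}
\hat w(y)=c_{N,s}\left(\frac{\mu}{A|y|^2-2\,y\!\cdot\!\xi+1}\right)^{\!\frac{N-2s}{2}}.
\end{equation*}
Completing the square in $y$ (dividing through by $A$ inside the parenthesis) I would recognize this as another bubble: setting $\xi':=\xi/A$ and $\mu':=\mu/A$, one obtains $\hat w(y)=w_{\mu'}(y-\xi')$. Thus the Kelvin transform acts on the parameters by the map $(\mu,\xi)\mapsto(\mu/A,\xi/A)$.

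For the equivalence, if $\mu^2+|\xi|^2=1$ then $A=1$, so $(\mu',\xi')=(\mu,\xi)$ and $\hat w=w$. Conversely, if $\hat w=w$, then $w_{\mu'}(\cdot-\xi')\equiv w_\mu(\cdot-\xi)$; since the family $\{U_{x,\Lambda}\}$ parametrizes distinct functions uniquely (they have a unique maximum at $x$ and a unique concentration scale $\Lambda$), I must have $\mu'=\mu$ and $\xi'=\xi$, which forces $A=1$. The only nontrivial step is the algebraic identity for $|y/|y|^2-\xi|^2$ and keeping track of the $|y|$-powers, which should be a short calculation; no genuine obstacle is expected.
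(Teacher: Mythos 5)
Your computation is correct: the Kelvin image of $w_\mu(\cdot-\xi)$ is indeed the bubble with parameters $(\mu/A,\xi/A)$, $A=\mu^2+|\xi|^2$, and the uniqueness of the bubble parametrization gives the ``only if'' direction. This is precisely the ``simple algebra computation'' the paper invokes without writing out, so your argument fills in exactly the intended proof.
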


Let $k$  be a large positive integer and $\mu>0$ be a small concentration parameter such that:
\begin{equation}
\mu=\delta^{\frac{2}{N-2s}}k^{-3}
\end{equation}
where $\delta$ is a positive parameter that will be fixed later. Let
$$\xi_j=\sqrt{1-\mu^2}(e^{\frac{2\pi(j-1)}{k}i}, 0, \dots, 0), j=1,\cdots, k.$$
We denote $U_j(y):=w_{\mu}(y-\xi_j), j=1,\cdots, k,$
and consider the function
$$U_{\ast}(y):=U(y)-\sum_{j=1}^{k}U_j(y).$$

In order to obtain  sign-changing solutions for problem (\ref{P2}), we follow the method of \cite{r2,r18}
and use the number of the bubble solutions $U_j$ as a parameter.
The idea of using the number of bubbles as a parameter was first used by Wei
and Yan \cite{r3} in constructing infinitely many positive solutions to the prescribing scalar
curvature problem.   We will prove that when the
bubbles number $k$ is large enough,  problem (\ref{P2}) has a solution of the form:
$$u(y)=U_{\ast}(y)+\phi(y),$$
where $\phi$ is a small function  when compared with $U$. If  $u$  satisfies the above  form, then  problem (\ref{P2}) can be rewritten as
\begin{equation}\label{e3}
(-\Delta)^{s}\phi-p|U_{\ast}|^{p-1}\phi+E-N(\phi)=0,
\end{equation}
where $p=2^{\ast}-1, 2^{\ast}=\frac{2N}{N-2s}$, and
$$E=(-\Delta)^{s}U_{\ast}-|U_{\ast}|^{p-1}U_{\ast},$$
$$N(\phi)=|U_{\ast}+\phi|^{p-1}(U_{\ast}+\phi)-|U_{\ast}|^{p-1}U_{\ast}-p|U_{\ast}^{p-1}|\phi.$$

We will show that for sufficiently large $k$, the error term $E$ will be controlled small enough so that some asymptotic estimate holds.
In order to obtain the better control on the error, for a fixed number $\frac{N}{s}>q>\frac{N}{2s}$, we consider  the following weighted $L^q$ norm:
\begin{equation}\label{e23}
  \|h\|_{\ast\ast}:=\left\|(1+y)^{N+2s-\frac{2N}{q}}h(y)\right\|_{L^{q}(\mathbb{R}^{N})}
\end{equation}
and
\begin{equation}\label{e24}
\|\phi\|_{\ast}:=\left\|(1+y)^{N-2s}\phi(y)\right\|_{L^{\infty}(\mathbb{R}^{N})}.
\end{equation}

\begin{lemma}\label{l22}
 There exist an integer $k_0$ and a positive constant $C$ such that for all $k>k_0$, the following estimate  for $E$ is true:

 \begin{equation}\label{e4}
\|E\|_{\ast\ast}\leq
Ck^{-\frac{N}{q}-2s}.
\end{equation}
\end{lemma}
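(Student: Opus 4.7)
The starting point is that the fractional Laplacian is linear, and $(-\Delta)^s U = U^p$, $(-\Delta)^s U_j = U_j^p$, so the error reduces to a purely algebraic nonlinear discrepancy:
$$E = U^p - \sum_{j=1}^k U_j^p - \Big|U - \sum_{j=1}^k U_j\Big|^{p-1}\Big(U - \sum_{j=1}^k U_j\Big).$$
The idea is to view $E$ as the remainder in a pointwise Taylor expansion of the map $t\mapsto |t|^{p-1}t$, around a different base point in different regions of $\mathbb{R}^N$.

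I would decompose $\mathbb{R}^N = \Omega_0 \cup \bigcup_{j=1}^k \Omega_j$, where $\Omega_j$ is a ball around $\xi_j$ of radius comparable to the half-separation $|\xi_1-\xi_2|/2 \sim 1/k$, and $\Omega_0$ is the complement (the "far" region). By the cyclic $\mathbb{Z}_k$ symmetry of the configuration, the contributions from $\Omega_2,\ldots,\Omega_k$ are all equal to that from $\Omega_1$ in the weighted $L^q$ norm, costing at most a factor of $k^{1/q}$ under the $L^q$-sum.

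On $\Omega_j$, $U_j$ dominates $U$ and all other $U_i$, so I would expand $|U_\ast|^{p-1}U_\ast$ around $-U_j$ to obtain
$$E \approx U^p - p\,U_j^{p-1}\Big(U-\sum_{i\ne j}U_i\Big) + \text{higher-order terms}.$$
After the change of variable $y=\xi_j+\mu z$, which sends $U_j$ into $U$, the leading integrands become $U(z)^{p-1}$ times slowly varying factors, controlled by the pointwise estimates $U(\xi_j)\sim 1$ and $U_i(\xi_j)\lesssim \mu^{(N-2s)/2}|\xi_i-\xi_j|^{-(N-2s)}$. On $\Omega_0$, where $U$ dominates, one expands around $U$ to get $E\approx -pU^{p-1}\sum_j U_j + \sum_j U_j^p$ plus cubic remainders; these are controlled using the decay $U_j(y) \le C\mu^{(N-2s)/2}(1+|y-\xi_j|)^{-(N-2s)}$ against the weight $(1+|y|)^{N+2s-2N/q}$.

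The main obstacle is the precise bookkeeping of the three types of quantities that arise: (i) the self-interaction $U^p - U_j^p - |U-U_j|^{p-1}(U-U_j)$ on $\Omega_j$, which after rescaling yields a contribution of order $\mu^{(N-2s)/2}$; (ii) the cross-bubble sums, whose pointwise magnitudes $\sum_{i\ne j}|\xi_i-\xi_j|^{-(N-2s)}$ grow like $k^{N-2s}$ through the nearest-neighbor terms; and (iii) the bubble-summation factor $k^{1/q}$ from combining $k$ identical regions in the $L^q$ norm. Balancing these against $\mu^{(N-2s)/2} = \delta k^{-3(N-2s)/2}$ is what forces the choice $\mu\sim k^{-3}$ and, together with the assumption $q\in(N/(2s),N/s)$ guaranteeing convergence of the weighted integrals at both the origin and infinity, produces exactly the announced rate $\|E\|_{\ast\ast}\le Ck^{-N/q-2s}$.
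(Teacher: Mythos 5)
Your skeleton coincides with the paper's: the paper also writes $E$ purely algebraically (since $(-\Delta)^sU=U^p$, $(-\Delta)^sU_j=U_j^p$), splits $\mathbb{R}^N$ into the interior region $\bigcup_j\{|y-\xi_j|\le\eta/k\}$ and its complement, uses the mean value theorem to expand $|U_\ast|^{p-1}U_\ast$ around $U$ in the exterior and around $-U_j$ near $\xi_j$, rescales $y=\xi_j+\mu z$ so that $U_j$ becomes $U$, and exploits the $\mathbb{Z}_k$-symmetric spacing $|\xi_i-\xi_j|\sim|i-j|/k$. So there is no divergence of method.

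The genuine gap is that the proposal stops exactly where the lemma begins: the weighted $L^q$ estimates are never carried out, and the final sentence simply asserts that "balancing produces the announced rate." Two concrete points are missing. First, in the interior region your statement that the self-interaction "yields a contribution of order $\mu^{(N-2s)/2}$" is only the pointwise size of the rescaled error $\tilde E_j(z)=\mu^{\frac{N+2s}{2}}E(\xi_j+\mu z)\lesssim \mu^{\frac{N-2s}{2}}(1+|z|)^{-4s}$; to pass to $\|E\|_{\ast\ast}$ one must reinstate the Jacobian and scaling factor $\mu^{\frac{N}{q}-\frac{N+2s}{2}}$, use $4sq>2N$ (this is precisely where $q>\frac{N}{2s}$ enters, to make $\int_0^{\eta/(k\mu)}r^{N-1}(1+r)^{-4sq}\,dr$ converge), and then confront the resulting power $\mu^{\frac{N}{q}-2s}$ (together with the number-of-balls factor) with the target $k^{-\frac{N}{q}-2s}$ using $\mu\sim k^{-3}$. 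None of this exponent arithmetic appears in your plan, and it is the entire content of the lemma; it is also the delicate step in the paper's own Step 2. Second, your bookkeeping factor $k^{1/q}$ "from combining $k$ identical regions" is only valid for the disjoint interior balls; in the exterior region the $j$-th singular term $|y-\xi_j|^{-(N-2s)}$ is spread over the whole region, and the paper must instead compute $\sum_{j}\bigl(\int_{|y-\xi_j|>\eta/k}(1+|y|)^{(N+2s)q-2N}(1+|y|)^{-4sq}|y-\xi_j|^{-(N-2s)q}\,dy\bigr)^{1/q}$, i.e. a factor $k$ together with the boundary integral $\int_{\eta/k}^1 r^{N-1-(N-2s)q}\,dr$, yielding the rate $k^{1+s-\frac{N}{2}-\frac{N}{q}}$; your accounting neither produces nor compares this exponent. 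Until these two computations are written out, the claimed bound $\|E\|_{\ast\ast}\le Ck^{-\frac{N}{q}-2s}$ is not proved but presupposed.
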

\begin{proof}
We prove this lemma in two steps.  Firstly, we consider the error term $E$ in the exterior region:
$$EX:=\bigcap_{j=1}^{k}B_{\xi_{j}}^c(\eta/k):=\bigcap_{j=1}^{k}\left\{|y-\xi_j|>\eta/k\right\}.$$
Here $\eta > 0$ is a positive and small constant, independent of $k$. Secondly, we will do it in the interior region:
$$IN=EX^{c}=\bigcup_{j=1}^{k}\left\{|y-\xi_j|\leq\eta/k\right\},$$
where $\eta\ll1$.

\noindent \textbf{Step 1}. By the mean value theorem, we have
\begin{align}\label{e25}
E&= (-\Delta)^{s}U_{\ast}-|U_{\ast}|^{p-1}U_{\ast}\nonumber\\
&=(-\Delta)^{s}\left[U-\sum_{j=1}^{k}U_j\right]-\left|U-\sum_{j=1}^kU_j\right|^{p-1}\left(U-\sum_{j=1}^{k}U_j\right)\nonumber\\
&=U^p-\sum_{j=1}^{k}U_j^{p}-\left|U-\sum_{j=1}^kU_j\right|^{p-1}\left(U-\sum_{j=1}^{k}U_j\right)\nonumber\\
&=-\left[p\left|U-t\sum_{j=1}^kU_j \right|^{p-1}\left(-\sum_{j=1}^{k}U_j\right)+\sum_{j=1}^{k}U_j^{p}\right]\nonumber\\
&=p\left|U-t\sum_{j=1}^kU_j \right|^{p-1}\left(\sum_{j=1}^{k}U_j\right)-\sum_{j=1}^{k}U_j^{p}, \ \mbox{for}\ t\in(0,1).
\end{align}
Now the exterior region  is divided into two parts, that is,

$$A_1:=\{y:|y|\geq 2\}\ \mbox{and}\ A_2:=\left\{|y|\leq 2\right\}\cap \left[\bigcap_{j=1}^{k}\left\{|y-\xi_j|>\eta/k\right\}\right].$$
For $y\in A_1$, one has $\frac{1}{|y-\xi|}\sim \frac{1}{1+|y|}$. So,
\begin{align}\label{e6}
  |E(y)|\leq & C\left\{(1+|y|^2)^{-2s}+\left[\sum_{j=1}^k\mu^{\frac{N-2s}{2}}(\mu^2+|y-\xi_j|^2)^{-\frac{N-2s}{2}}\right]^{\frac{4s}{N-2s}}\right\} \nonumber \\
  & \ \ \cdot\left[\sum_{j=1}^k\mu^{\frac{N-2s}{2}}(\mu^2+|y-\xi_j|^2)^{-\frac{N-2s}{2}}\right]\nonumber \\
  &\leq C\left[(1+|y|^2)^{-2s}+\frac{\mu^{2s }k^{\frac{4s}{N-2s}}}{(1+|y|^2)^{2s}}\right]\cdot \sum_{j=1}^{k}\frac{\mu^{\frac{N-2s}{2}}}{|y-\xi_j|^{N-2s}}\nonumber \\
  &\leq C\frac{\mu^{\frac{N-2s}{2}}}{(1+|y|^2)^{2s}} \sum_{j=1}^{k}\frac{1}{|y-\xi_j|^{N-2s}}.
\end{align}
For $y\in A_2$, let us consider two cases:

\noindent \textbf{(1)} There is a $i_0\in \{1,2,3,\cdots, k\}$ such that $y$ is closest to $\xi_{i_0}$, and far from  all the other $\xi_j$'s ($j\not=i_0$). Then,
$$|y-\xi_j|\geq \frac{1}{2}|\xi_{i_0}-\xi_j|\sim \frac{|j-i_0|}{k}.$$

\noindent \textbf{(2)} y is far from all $\xi_i$'s, that is, there is a $C_0>0$ such that $|y-\xi_i|\geq C_0(1\leq i\leq k).$
\begin{align}\label{e5}
  |E(y)|\leq & C\left\{(1+|y|^2)^{-2s}+\left[\sum_{j=1}^k\mu^{\frac{N-2s}{2}}(\mu^2+|y-\xi_j|^2)^{-\frac{N-2s}{2}}\right]^{\frac{4s}{N-2s}}\right\}\nonumber \\
  & \ \ \cdot\left[\sum_{j=1}^k\mu^{\frac{N-2s}{2}}(\mu^2+|y-\xi_j|^2)^{-\frac{N-2s}{2}}\right]\nonumber \\
  &\leq C\left[(1+|y|^2)^{-2s}+\left(\frac{\mu^{\frac{N-2s}{2}}}{|y-\xi_{i_0}|^{N-2s}}+\sum_{j\not=i_0 }\frac{\mu^{\frac{N-2s}{2}}}{|y-\xi_{j}|^{N-2s}}\right)^{\frac{4s}{N-2s}}\right]\cdot \sum_{j=1}^{k}\frac{\mu^{\frac{N-2s}{2}}}{|y-\xi_j|^{N-2s}}\nonumber \\
  &\leq C\left[(1+|y|^2)^{-2s}+\left(\mu^{2s}k^{4s}+\max\left\{\sum_{j\not=i_0 }\frac{\mu^{2s}k^{4s}}{|j-i_0|^{4}}, k^{\frac{4s}{N-2s}\mu^{2s}}\right\}\right)\right]\cdot \sum_{j=1}^{k}\frac{\mu^{\frac{N-2s}{2}}}{|y-\xi_j|^{N-2s}}\nonumber \\
  &\leq C\frac{\mu^{\frac{N-2s}{2}}}{(1+|y|^2)^{2s}} \sum_{j=1}^{k}\frac{1}{|y-\xi|^{N-2s}}.
\end{align}
Consequently, from \eqref{e6} and \eqref{e5}, in the exterior region, we obtain
\begin{align*}
  \|E\|_{\ast\ast} =&\|(1+|y|^{(N+2s)q-2N})E^q(y)\|_{L^q(EX)}  \\
\leq & C\mu^{\frac{N-2s}{2}}\sum_{j=1}^{k}\left[\int_{B_{\xi_j^c}(\eta/k)}\frac{(1+|y|)^{(N+2s)q-2N}}{(1+|y|)^{4s q}|y-\xi_j|^{(N-2s)q}}\right]^{1/q}\\
\leq& C\mu^{\frac{N-2s}{2}}k\left[\int_{\eta/k}^1\frac{r^{N-1}dr}{r^{(N-2s)q}}+\int_1^{+\infty}r^{-(N+1)}dr\right]^{1/q}\\
\leq& C(k^{1+s-\frac{N}{2}-\frac{N}{q}}+k^{1+3s-\frac{3N}{2}})\leq C k^{1+s-\frac{N}{2}-\frac{N}{q}}.
\end{align*}

\noindent \textbf{Step 2}. In the case of the interior region $IN$, we easily know that for all $y\in IN$, there is $j\in \{1,2,3,\cdots, k\}$, such that
$|y-\xi_j|\leq \eta/k$.  Similar to \eqref{e25}, we can obtain
\begin{equation}\label{e28}
E=p\left[U_j-t\left(-\sum_{j\not=i}^kU_i+U \right)\right]^{p-1}\cdot\left(-\sum_{j\not=i}^kU_i+U \right)+\left(\sum_{j\not=i}^kU_i \right)^{p}-U^p, \ \mbox{for}\ t\in(0,1).
\end{equation}
In order to measure the error term $E$, we define
$$\tilde{E}_j=\mu^{\frac{N+2s}{2}}E(\xi_j+\mu y).$$
We observe that
$$\mu^{\frac{N-2s}{2}}U_j(\xi_j+\mu y)=U(y)\ \mbox{and}\  U_i(y)=\mu^{-\frac{N-2s}{2}}U(\mu^{-1}(y-\xi_i))$$
Thus, for $i\not=j$,
\begin{equation}\label{e29}
\mu^{\frac{N-2s}{2}}U_i(\xi_j+\mu y)=U(y-\mu^{-1}(\xi_i-\xi_j)).
\end{equation}
Note also that
$\mu^{-1}|\xi_i-\xi_j|\sim \frac{|i-j|}{k\mu}$.
Hence, by \eqref{e28} and  \eqref{e29}, we estimate
\begin{align*}
  |\tilde{E_j}(y)|\leq & C\left|U(y)+\sum_{i\not= j}\frac{(k\mu)^{N-2s}}{|j-i|^{N-2s}}+\mu^{\frac{N-2s}{2}}U(\xi_j+\mu y)\right|^{p-1} \\
  & \ \ \cdot\left( \sum_{i\not= j}\frac{(k\mu)^{N-2s}}{|j-i|^{N-2s}}+\mu^{\frac{N-2s}{2}}U(\xi_j+\mu y)\right)+\sum_{i\not= j}\left(\frac{(k\mu)^{N-2s}}{|j-i|^{N-2s}}\right)^p+\mu^{\frac{N+2s}{2}}U^p(\xi_j+\mu y)\\
  &\leq C\left|\mu^{\frac{N-2s}{2}}+\left(\frac{1}{1+|y|^2}\right)^{\frac{N-2s}{2}}\right|^{p-1}\cdot
  \mu^{\frac{N-2s}{2}}+\mu^{\frac{N-2s}{2}p}+\mu^{\frac{N+2s}{2}}\\
  &\leq C\left|\mu^{\frac{N+2s}{2}}+\frac{\mu^{\frac{N-2s}{2}}}{1+|y|^{4s}}\right|\\
  &\leq C\frac{\mu^{\frac{N-2s}{2}}}{1+|y|^{4s}}.
\end{align*}
Therefore, we can get
\begin{align*}
\|E\|_{\ast\ast(|x-\xi_j|<\eta/k)}&=\left(\int_{|x-\xi_j|<\eta/k} (1+|x|)^{(N+2sq)-2N} \mu^{-\frac{N+2s}{2}q} \tilde{E}^q\left(\frac{x-\xi_j}{\mu}\right)dx\right)^{\frac{1}{q}}\\
&\leq C\left[\int_{|y|\leq \eta/(k\mu)}\left|\mu^{\frac{N}{q}-\frac{N+2s}{2}}\tilde{E}_j(y)\right|^qdy\right]^{1/q}\\
&\leq C\left[\mu^{N-2qs}\int_0^{\eta/(k\mu)}\frac{r^{N-1}}{1+r^{4qs }}dr\right]^{1/q}\\
&\leq C\mu^{2s}k^{-\frac{N}{q}+4s}\leq Ck^{-\frac{N}{q}-2s}.
\end{align*}

Finally, by combining the estimates in the exterior region and interior region together, we have
\begin{align*}
\|E\|_{\ast\ast}&\leq \|E\|_{\ast\ast(EX)} +\|E\|_{\ast\ast(IN)}\\
&\leq \|E\|_{\ast\ast(EX)} +\sum_{j=1}^k\|E\|_{\ast\ast(IN)(|x-\xi_j|<\eta/k)}\\
&\leq Ck^{-\frac{N}{q}-2s}.
\end{align*}
\end{proof}

\section{A linear result}

We consider the operator $L_0$ defined as
$$L_0(\phi):=(-\Delta)^{s} \phi-pU^{p-1}\phi,\  \mbox{with}\ p=m^{\ast}-1.$$
We can know that (see \cite{r4}) the solution space for the homogeneous equation $L_0(\phi)=0$ is spanned by the $n+1$ functions,
$$v_i=\partial_{y_i}U,\  i=1,2, \cdots, N; \ v_{N+1}=x\cdot\nabla U+\frac{n-2s}{2}U.$$
This section is devoted to establishing an invertibility theory for
\begin{equation}\label{e31}
L_0(\phi)=h\ \mbox{in}\ \mathbb{R}^N.
\end{equation}

It  is worth noting that  the $L^p$  to $W^{2s,p}$ estimate does not hold for all $p$ in this fractional framework (see Remarks 7.1
and 7.2 in \cite{r5}).  But we have  he following Lemma \ref{l30}. This is an important  ingredient in the proof of Lemma \ref{l31}.
\begin{lemma}[\cite{r5,r6}]\label{l30}
Let $\Omega\subset \mathbb{R}^N$ be a bounded $C^{1,1}$ domains, $s\in (0,1)$, $n>2s$, $g\in C(\bar{\Omega})$, and $u$ be the solution of

\begin{equation}
\left\{
\begin{array}{ll}
(-\Delta)^{s} u=g, &\mbox{in}\    \Omega, \\
u> 0,  &\mbox{on}\    \mathbb{R}^N\setminus \Omega.
\end{array}
\right.    \label{P4}
\end{equation}
\begin{compactitem}
  \item[\emph{(i)}] For each $1\leq r<\frac{n}{n-2s}$, there exists a constants $C$ depending only $n, s, r$ and $|\Omega|$, such that
  $$\|u\|_{L^r(\Omega)}\leq C\|g\|_{L^1(\Omega)},\ r<\frac{n}{n-2s}.$$
  \item[\emph{(ii)}] Let  $1\leq p<\frac{n}{2s}$. Then there exists a constant $C$ depending only on $n,s$ and $p$ such that
   $$\|u\|_{L^q(\Omega)}\leq C\|g\|_{L^p(\Omega)},\ \mbox{where}\  q=\frac{np}{n-2ps}.$$
  \item[\emph{(iii)}]  Let  $\frac{n}{2s}\leq p<\infty$. Then, there exists a constants $C$ depending only on  $n,s, p$ and $|\Omega|$  such that
   $$\|u\|_{C^{\beta}(\Omega)}\leq C\|g\|_{L^p(\Omega)},\ \mbox{where}\  \beta=\min\left\{s, 2s-\frac{n}{p}\right\}.$$
\end{compactitem}

\end{lemma}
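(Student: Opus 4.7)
My plan is to reduce the three estimates to Hardy--Littlewood--Sobolev and Hölder bounds for Riesz potentials, exploiting the Green-function representation of the solution to the Dirichlet problem. Since $\Omega$ is a bounded $C^{1,1}$ domain and $g \in C(\overline\Omega)$, classical existence/uniqueness theory for the fractional Dirichlet problem (with zero exterior datum) gives a unique solution, which admits the representation
\[
u(x) = \int_\Omega G_\Omega(x,y)\, g(y)\, dy,
\]
where $G_\Omega$ is the Green function of $(-\Delta)^s$ on $\Omega$. The first input I would establish is the pointwise comparison with the whole-space fundamental solution,
\[
0 \leq G_\Omega(x,y) \leq \frac{C}{|x-y|^{N-2s}},
\]
obtained by the maximum principle together with the explicit fundamental solution $c_{N,s}|x|^{2s-N}$ of $(-\Delta)^s$ in $\mathbb{R}^N$. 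Consequently $|u(x)| \leq C\, I_{2s}(|g|\chi_\Omega)(x)$, where $I_{2s}$ is the Riesz potential of order $2s$, and the analysis reduces to mapping properties of $I_{2s}$.

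\textbf{Parts (i) and (ii).} For (ii), the classical Hardy--Littlewood--Sobolev inequality gives $\|I_{2s} f\|_{L^q(\mathbb{R}^N)} \leq C\|f\|_{L^p(\mathbb{R}^N)}$ whenever $1 < p < N/(2s)$ and $1/q = 1/p - 2s/N$, i.e.\ $q = Np/(N-2ps)$; applied to $f = |g|\chi_\Omega$ this yields the stated inequality. The case $p = 1$ is not included in HLS because $I_{2s}$ only maps $L^1$ into weak $L^{N/(N-2s)}$; this is exactly the content of (i). Indeed, since the Riesz kernel $|x|^{2s-N}$ lies in weak $L^{N/(N-2s)}$, convolution with an $L^1$ function produces a weak $L^{N/(N-2s)}$ function. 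Because $\Omega$ is bounded, weak $L^{N/(N-2s)}(\Omega)$ embeds continuously into $L^r(\Omega)$ for every $r < N/(N-2s)$, which gives (i).

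\textbf{Part (iii).} For $p > N/(2s)$, the Riesz potential maps $L^p(\mathbb{R}^N)$ into $C^{2s-N/p}_{\mathrm{loc}}$ by standard kernel splitting/Morrey-type arguments. This already gives interior Hölder regularity with exponent $2s - N/p$. The genuinely global (up-to-the-boundary) Hölder regularity is however capped by $s$: typical solutions of the Dirichlet problem on a $C^{1,1}$ domain behave like $\operatorname{dist}(x,\partial\Omega)^s$ near $\partial\Omega$, so they cannot gain more than $s$ Hölder regularity at the boundary. I would prove this by barrier/comparison arguments, using the explicit torsion function $(1-|x|^2)_+^s$ on the unit ball (which solves $(-\Delta)^s u = \mathrm{const}$) together with a local flattening of $\partial\Omega$, valid because $\Omega$ is $C^{1,1}$. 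Matching the interior estimate with the boundary estimate via a Campanato characterization of Hölder spaces gives $\beta = \min\{s, 2s - N/p\}$.

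\textbf{Main obstacle.} The hard part is the boundary cap $\beta \leq s$ in (iii); the other estimates are essentially immediate from HLS. Establishing the sharp boundary Hölder exponent requires the fine boundary-regularity theory developed by Ros-Oton--Serra: a one-sided barrier based on the torsion function controls $u/\operatorname{dist}(\cdot,\partial\Omega)^s$, and its continuity up to $\partial\Omega$ is what forces the exponent $s$. Once that ingredient is in hand, combining it with the interior Riesz-potential estimate via a standard Campanato/Morrey decomposition closes the proof.
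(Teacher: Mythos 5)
This lemma is not proved in the paper at all: it is quoted verbatim (up to typos) from Ros-Oton--Serra \cite{r5} (Proposition 1.4 there), with the potential-theoretic estimates traced back to Stein \cite{r6}, so there is no in-paper argument to compare against. Your sketch essentially reconstructs the proof given in the cited source, and it is sound: read the exterior condition as $u=0$ on $\mathbb{R}^N\setminus\Omega$ (the ``$u>0$'' in the statement is a misprint), represent $u=\int_\Omega G_\Omega(\cdot,y)g(y)\,dy$, dominate $0\le G_\Omega(x,y)\le C|x-y|^{2s-N}$ by the maximum principle, and then (i) is the weak-type $(1,\tfrac{N}{N-2s})$ bound for the Riesz potential plus the embedding of weak $L^{N/(N-2s)}$ into $L^r(\Omega)$ for $r<\tfrac{N}{N-2s}$ on a bounded domain, while (ii) is Hardy--Littlewood--Sobolev; your observation that $p=1$ must be routed through (i) rather than (ii) is correct and in fact slightly more careful than the paper's transcription, which writes $1\le p$ in (ii) (and $\tfrac{n}{2s}\le p$ in (iii), where at the endpoint $2s-\tfrac{n}{p}=0$ and the claim degenerates to an $L^\infty$ bound). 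For (iii), you correctly identify the only genuinely nontrivial ingredient, namely the boundary cap $\beta\le s$; this is where the $C^{1,1}$ hypothesis enters, via either the sharp boundary Green-function estimates or the Ros-Oton--Serra $C^s$-up-to-the-boundary theory built on the torsion-function barrier $(1-|x|^2)_+^s$, exactly as you propose, and matching with the interior Riesz-potential estimate through a Campanato/Morrey argument closes the proof. In short: the proposal is a correct reconstruction of the cited result's proof, modulo routine details of the barrier and matching steps that you flag but do not carry out.
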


\begin{lemma}\label{l31}
Let $h(y)$ be a function such that $\|h\|_{\ast \ast}<\infty$. Assume that $\frac{N}{2s}<q<\frac{N}{s}$,  and
$$ \int_{\mathbb{R}^N}v_l h=0, \forall l=1,2,\cdots, N+1.$$
Then problem \eqref{e31} has a unique solution satisfying $\|\phi\|_{\ast}<\infty$ and
$$ \int_{\mathbb{R}^N}U^{p-1}v_l h=0, \forall l=1,2,\cdots, N+1.$$
Furthermore, there exists  a constant $C$ which depends on $q$ and $N$ such that
$$\|\phi\|_{\ast}\leq C\|h\|_{\ast\ast}.$$
\end{lemma}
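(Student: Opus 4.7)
The plan is to adapt the standard blow-up/Fredholm scheme to this nonlocal setting. The bulk of the argument goes into the a priori estimate $\|\phi\|_\ast\le C\|h\|_{\ast\ast}$; once it is available, existence and uniqueness follow from a Hilbert-space argument in $\mathcal{D}^{s,2}(\mathbb{R}^N)$.

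I argue the a priori estimate by contradiction. Assume sequences $\phi_n,h_n$ satisfying $L_0(\phi_n)=h_n$, $\|\phi_n\|_\ast=1$, $\|h_n\|_{\ast\ast}\to 0$, and $\int U^{p-1}v_l\phi_n=0$ for each $l$. The bound $|\phi_n(y)|\le(1+|y|)^{-(N-2s)}$, combined with Lemma \ref{l30}(iii) applied locally to $pU^{p-1}\phi_n+h_n$, yields uniform interior $C^{\beta}$-bounds; extract a subsequence with $\phi_n\to\phi_\infty$ locally uniformly. Then $L_0(\phi_\infty)=0$ in $\mathbb{R}^N$ with $|\phi_\infty(y)|\le C(1+|y|)^{-(N-2s)}$, so the nondegeneracy result from \cite{r4} gives $\phi_\infty=\sum_{l=1}^{N+1}c_l v_l$. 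Passing to the limit in the orthogonality $\int U^{p-1}v_l\phi_n=0$ (justified by the integrable dominating function $U^{p-1}(y)(1+|y|)^{-(N-2s)}$) and using the nondegeneracy of the matrix $\bigl(\int U^{p-1}v_l v_m\bigr)$ forces $c_l=0$, hence $\phi_\infty\equiv 0$.

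To close the contradiction, pick $y_n$ with $(1+|y_n|)^{N-2s}|\phi_n(y_n)|\ge 1/2$. Local uniform convergence to zero excludes $|y_n|$ bounded, so $|y_n|\to\infty$. Using the Riesz fundamental solution of $(-\Delta)^s$, write
$$\phi_n(y)=c_{N,s}\int_{\mathbb{R}^N}\frac{pU^{p-1}(z)\phi_n(z)+h_n(z)}{|y-z|^{N-2s}}\,dz.$$
For the $U^{p-1}\phi_n$ piece, $U^{p-1}(z)\le C(1+|z|)^{-4s}$ and the pointwise bound on $\phi_n$ give an integrand of order $(1+|z|)^{-(N+2s)}|y-z|^{-(N-2s)}$; splitting into $|z|\le R$ (where $\phi_n\to 0$ uniformly by the previous step) and $|z|>R$ (where $U^{p-1}$ is small) yields an $o(1)(1+|y|)^{-(N-2s)}$ bound. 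For the $h_n$ piece, H\"older's inequality applied with the weight implicit in $\|\cdot\|_{\ast\ast}$ reduces matters to showing $\bigl\|(1+|z|)^{-(N+2s-2N/q)}|y-z|^{-(N-2s)}\bigr\|_{L^{q'}_z}\le C(1+|y|)^{-(N-2s)}$, which one proves by splitting the $z$-integral into $|z|\le|y|/2$, $|y|/2\le|z|\le 2|y|$, $|z|\ge 2|y|$. The two-sided restriction $\frac{N}{2s}<q<\frac{N}{s}$ is exactly what makes $|y-z|^{-(N-2s)q'}$ integrable at the diagonal and forces the dual weight to decay fast enough at infinity. Multiplying by $(1+|y_n|)^{N-2s}$ and evaluating at $y=y_n$ then contradicts $|\phi_n(y_n)|(1+|y_n|)^{N-2s}\ge 1/2$.

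For existence, set $H:=\{\phi\in\mathcal{D}^{s,2}(\mathbb{R}^N):\int U^{p-1}v_l\phi=0,\ l=1,\dots,N+1\}$. The bilinear form $a(\phi,\psi):=\int(-\Delta)^{s/2}\phi\,(-\Delta)^{s/2}\psi-p\int U^{p-1}\phi\psi$ has kernel $\mathrm{span}(v_1,\dots,v_{N+1})$, and the orthogonality $\int v_l h=0$ is precisely the Fredholm solvability condition; Lax--Milgram (or Riesz representation on the quotient) delivers a weak solution $\phi\in H$. Bootstrapping through the integral representation above and Lemma \ref{l30} then shows $\phi$ satisfies $\|\phi\|_\ast<\infty$, and the a priori bound from the previous steps supplies uniqueness and the claimed inequality. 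The main obstacle is the weighted $L^{q'}$-estimate in the third paragraph: the norm $\|\cdot\|_{\ast\ast}$ is designed exactly so that convolution with the Riesz kernel $|y-z|^{-(N-2s)}$ reproduces the decay $(1+|y|)^{-(N-2s)}$ built into $\|\cdot\|_\ast$, and this only works in the stated range of $q$.
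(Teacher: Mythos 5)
Your proposal is essentially correct, but it reaches the key estimate by a different route than the paper. The existence--uniqueness part coincides with the paper's: both work in $H=\{\phi\in\mathcal{D}^{s,2}:\int U^{p-1}v_l\phi=0\}$ and use compactness of $\phi\mapsto pU^{p-1}\phi$ together with the Fredholm alternative and the characterization of $\ker L_0=\mathrm{span}\{v_1,\dots,v_{N+1}\}$ from \cite{r4} (note that Lax--Milgram alone does not apply, since the form $a(\phi,\psi)$ is not coercive; your parenthetical ``Fredholm/quotient'' reading is the correct one, and one should also say a word about why the Lagrange multipliers $\sum_l c_l U^{p-1}v_l$ vanish, which is where $\int v_l h=0$ enters). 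Where you genuinely differ is the bound $\|\phi\|_\ast\le C\|h\|_{\ast\ast}$: the paper gets it directly from the local regularity estimate of Lemma \ref{l30}(iii) combined with the Kelvin transform $\tilde\phi(y)=|y|^{2s-N}\phi(|y|^{-2}y)$, exploiting the conformal covariance of the equation and the fact that the $\ast$ and $\ast\ast$ weights are exactly Kelvin-compatible, so the decay at infinity is read off from an interior estimate near the origin; you instead run a blow-up/contradiction argument using the nondegeneracy of $U$ and then recover the decay from the Riesz-potential representation and weighted H\"older estimates. Your route is more robust (it does not rely on inversion symmetry, and it makes explicit use of the orthogonality $\int U^{p-1}v_l\phi_n=0$, without which the estimate is false since the $v_l$ themselves have finite $\ast$-norm), at the price of being non-constructive in the constant and needing a few standard but unstated justifications: the validity of the representation $\phi_n=c_{N,s}\,|\cdot|^{-(N-2s)}\!\ast(pU^{p-1}\phi_n+h_n)$ (a Liouville-type argument for decaying $s$-harmonic functions), interior H\"older bounds for the nonlocal operator (which require handling the tail, e.g.\ again via the representation rather than a naive localization of Lemma \ref{l30}), and a genuine iteration in the final bootstrap giving $\|\phi\|_\ast<\infty$ for the Hilbert-space solution. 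One small inaccuracy: in your weighted $L^{q'}$ convolution estimate only the lower restriction $q>\frac{N}{2s}$ is used (integrability of $|y-z|^{-(N-2s)q'}$ at the diagonal and of the dual weight at infinity); the upper bound $q<\frac{N}{s}$ plays no role there. None of these affect the soundness of the overall scheme.
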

\begin{proof}
Set
$$H=\left\{ \phi\in \mathcal{D}^{s,2}(\mathbb{R}^N): \int_{\mathbb{R}^N} U^{p-1}v_l \phi dx=0, \forall l=1,2,\cdots, N+1 \right\}.$$
Then $H$ is a Hilbert space equipped  with the following  norm:
$$\| u\|_{H}:=\int_{\mathbb{R}^N} |\xi|^{2s}|\hat{u}(\xi)|^2d\xi=\int_{\mathbb{R}^N}|(-\Delta)^{\frac{s}{2}}u(y)|^2dy.$$
Furthermore, for all $\psi\in H$,  it is easy to show that
$$(L_0\phi, \psi)_{H}=(\phi, L_0\psi)_{H}.$$

Let $r=\frac{2N}{N+2s}, r'=\frac{2N}{N-2s}=p+1$. Since $\|h\|_{\ast} \leq \infty$, the H\"{o}lder inequality implies
\begin{align}\label{e32}
\|h\|_r&\leq  \left[\int_{\mathbb{R}^N} \left|h^r (1+|y|)^{(N+2s)r-\frac{2Nr}{q}}\cdot(1+|y|)^{-(N+2s)r+\frac{2Nr}{q}}\right|dy\right]^{\frac{1}{r}} \nonumber\\
&\leq \left[\int_{\mathbb{R}^N}|h|^q(1+|y|)^{(N+2s)q-2N}dy\right]^{1/q}\left[\int_{\mathbb{R}^N}(1+|y|)^{-2N}\right]^{\frac{1}{r}-\frac{1}{q}}\nonumber\\
&\leq C\|h\|_{\ast\ast}<\infty,
\end{align}
and
\begin{align}\label{e33}
\|U^{p-1}\phi\|_r&\leq \left(\int_{\mathbb{R}^N}|\phi|^{r\cdot\frac{N+2s}{N-2s}}\right)^{\frac{N-2s}{(N+2s)r}}\cdot
\left(\int_{\mathbb{R}^N}|U|^{(p-1)r\cdot\frac{N+2s}{4s}}\right)^{\frac{4s}{(N+2s)r}}\nonumber\\
&=\|\phi\|_{p+1}\cdot \left(\int_{\mathbb{R}^N}|U|^{\frac{2N}{N-2s}}\right)^{\frac{2s}{N}}\nonumber\\
&\leq C\|\phi\|_{p+1}\leq C\|\phi\|_H<\infty \ (\mbox{by\  fractional \ Sobolev\  inequality}).
\end{align}
For  $f\in L^r$, using \eqref{e32} and  \eqref{e33}, the weak solution can be considered  by the following equation

\begin{align}\label{e34}
\int_{\mathbb{R}^N}(-\Delta)^{\frac{s}{2}}\phi \cdot (-\Delta)^{\frac{s}{2}}\psi+\int_{\mathbb{R}^N} f\psi=0,\mbox{for\  all} \ \psi\in H.
\end{align}
Define the functional $A_f: H \rightarrow \mathbb{R}$ as follow:
$$A_f(\psi)=\int_{\mathbb{R}^N}f\psi,$$
and we easily know that
\begin{align}\label{e35}
\int_{\mathbb{R}^N}(-\Delta)^{\frac{s}{2}}\phi \cdot (-\Delta)^{\frac{s}{2}}\psi= A_f(\psi).
\end{align}
Furthermore, using H\"{o}lder inequality  again, one has
$$|A_f(\psi)|\leq \|f\|_r\|\psi\|_{p+1}\leq C\|f\|_r\|\psi\|_{H},$$
so this shows that $A_{f}$ is a bounded linear functional on the Hilbert space $H$. Applying the Riesz representation theorem, there is
a unique $\phi\in H$ such that
$$A_f(\psi)=\int_{\mathbb{R}^N}(-\Delta)^{\frac{s}{2}}\phi \cdot (-\Delta)^{\frac{s}{2}}\psi.$$
Hence, according to the functional $A_f$, one can define a new operator $A: L^r\rightarrow H$ by
$$A(f)=\phi\ \mbox{and}\ \langle A(f), \psi\rangle_H=\langle f, \psi\rangle, \forall \psi \in H.$$
Then (\ref{e31}) can be formulated as
$$ \phi= A(h)+A(pU^{p-1}\phi)=A(h)+A(\tau (\phi)),$$
where $\tau: H\rightarrow L^r, \phi\rightarrow pU^{p-1}\phi,$ is a compact mapping, thanks to local compactness of Sobolev's embedding and the fact that $U^{p-1}=O(|y|^{-4})$.

Let $B=A\circ \tau$, then we easily show that $B$ is the  operator from $H$ to $H$, and also compact,  self-adjoint. Hence problem \eqref{e31} can be equivalent to
$$(I-B)\phi=A(h).$$
Now   Fredholm alternative theorem  tell us that  the above equation has a solution if and only if
$$\forall v\in ker(I-B),\ \ (I-B)v=0=A(0).$$
Therefore, we get $h\equiv 0$ and
$$A(0)\in R(I-B)=(Ker(I-B^{\ast}))^{\perp}=(Ker(I-B))^{\perp}.$$
So problem \eqref{e31} be simplified as the homogeneous formula, namely,
$$L_0(v)=0,$$
where $v$ can be denoted by the sum of $v_i$'s, that is,
$$v(y)=\sum_{j=1}^{N+1}a_jv_j(y),$$
with constants $a_1, a_2,\cdots, a_{N+1}$.

By the definition of $H$, we have
$$0=\int_{\mathbb{R}^N}U^{p-1}v_j\cdot v=a_j\int_{\mathbb{R}^N} U^{p-1}v_j,$$
which yields the vanishing terms
$$a_j=0, j=1, 2,\cdots, N+1, \ \mbox{and}\ v\equiv 0, ker(I-B)=\{0\}.$$
Consequently, the orthogonal terms $R(I-B)=H$; this implies the existence of $\phi$ by
$$(I-B)\phi=A(h),$$
and the uniqueness of $\phi$ by
$$ker(I-B)=\{0\}.$$

In the following,  we will prove that $$\|\phi\|_{\ast}\leq C\|h\|_{\ast\ast}.$$  By Lemma \ref{l30}(iii), we have
\begin{equation}\label{e365}
 \|\phi\|_{L^{\infty}(B)}\leq  C\|(1+|y|^{N+2s-\frac{2N}{q}})h\|_{L^q(\mathbb{R}^N)}.
\end{equation}
Now let us consider Kelvin's transform of $\phi$,
$$\tilde{\phi}(y)=|y|^{2s-N}\phi(|y|^{-2} y).$$
Then we easily see that $\tilde{\phi}$ satisfies the equation
\begin{equation}\label{e37}
  (-\Delta)^{s} \tilde{\phi}-pU^{p-1}\tilde{\phi}=\tilde{h},\  \mbox{in}\ \mathbb{R}^N,
\end{equation}
where $\tilde{h}(y)=|y|^{-2s-N}h(|y|^{-2}y)$.
Note that
\begin{equation}\label{e38}
\|\tilde{h}\|_{L^q(|y|<2)}=\||y|^{N+2s-\frac{2N}{q}}h\|_{L^q(|y|>1/2)}\leq C\|(1+|y|)^{N+2s-\frac{2N}{q}}h\|_{L^q(\mathbb{R}^N)}
\end{equation}
Applying  Lemma \ref{l30} to \eqref{e37},  and by \eqref{e38}   we obtain
\begin{equation}\label{e385}
\|\tilde{\phi}\|_{L^{\infty}(B)}\leq  C\|\tilde{h}|_{L^q(B_2)}\leq C\|(1+|y|)^{N+2s-\frac{2N}{q}}h\|_{L^q(\mathbb{R}^N)}.
\end{equation}
But
\begin{equation}\label{e39}
\|\tilde{\phi}\|_{L^{\infty}(B)}=\||y|^{n-2s}\phi\|_{L^{\infty}(\mathbb{R}^N\setminus B_1)}.
\end{equation}
Finally,    \eqref{e365},  \eqref{e385} and  \eqref{e39} imply that
$$\|\phi\|_{\ast}\leq C\|h\|_{\ast\ast}.$$

\end{proof}

\section{A gluing procedure}
Let $\zeta(s)$ ba a smooth function satisfying
\begin{equation*}
  \zeta=\left\{\begin{array}{ll}
                 1 &  0\leq s<\frac{1}{2}; \\
                 \mbox{smooth} & \frac{1}{2}\leq s\leq 1; \\
                 0 &  s>1.
               \end{array}
  \right.
\end{equation*}
The cut-off functions are defined by
\begin{equation*}
  \zeta_j=\left\{\begin{array}{ll}
                 \zeta(k\eta^{-1}|y|^{-2}|(y-|y|^2\xi_j)|) &  \mbox{if}\ |y|\geq 1|; \\
                 \zeta(k\eta^{-1}|(y-\xi_j)|) &   \mbox{if}\ |y|< 1|.
               \end{array}
  \right.
\end{equation*}
Then a simple  algebra computation implies that
$$\zeta_j(y)=\zeta_j(|y|^{-2}y),\ \ \mbox{supp}(\zeta_j)\subset \{y: |y-\xi_j|\leq \eta/k\}, j=1,2,\cdots, k.$$

Let $\phi=\sum_{k=1}^k\tilde{\phi}_j+\psi,$ $\bar{y}=(y_1, y_2), y'=(y_3,\cdots, y_N)$, and we assume
\begin{equation}\label{e41}
\tilde{\phi}_j(\bar{y}, y')=\tilde{\phi}_1(e^{-\frac{2\pi(j-1)}{k}\sqrt{-1}}\bar{y}, y'),\ j=1,\cdots,k,
\end{equation}

\begin{equation}\label{e42}
\tilde{\phi}_1(y)=|y|^{2s-N}\tilde{\phi}_1(|y|^{-2}y),
\end{equation}

\begin{equation}\label{e43}
\tilde{\phi}_1(y_1, \cdots,y_s, \cdots, y_N)=\tilde{\phi}_1(y_1, \cdots,-y_s, \cdots, y_N), s=2,3,\cdots,N,
\end{equation}
and
\begin{equation}\label{e44}
\|\phi_1\|_{\ast}\leq \rho \ \mbox{with}\ \rho\ll 1,
\end{equation}
where $\phi_1(y):=\mu^{\frac{N-2s}{2}}\tilde{\phi}_1(\xi_1+\mu y)$.
Now, due to the cut-off function,  problem \eqref{e3} can be split into the following system:

\begin{equation}\label{e45}
 \left\{\begin{array}{l}
                  (-\Delta)^s\tilde{\phi}_j-p|U_{\ast}|^{p-1}\zeta_j\tilde{\phi}_j+\zeta_j\left[-p|U_{\ast}|^{p-1}\psi+E-N\left(\tilde{\phi}_j+\displaystyle\sum_{i\not=j} \tilde{\phi}_i+\psi \right)\right]=0, \ j=1,\cdots,k, \\
                (-\Delta)^s\psi-pU^{p-1}\psi+\left[-p(|U_{\ast}|^{p-1}-U^{p-1})
                \left(1-\displaystyle\sum_{j=1}^k\zeta_j\right)+pU^{p-1}\left(\displaystyle\sum_{j=1}^k\zeta_j\right)\right]\psi\\
                \ \ \ -p|U_{\ast}|^{p-1}\displaystyle\sum_{j=1}^k\left(1-\zeta_j\right)\tilde{\phi}_j
                +\displaystyle\left(1-\sum_{j=1}^k\zeta_j\right)\left(E-N\left(\displaystyle\sum_{j=1}^k \tilde{\phi}_i+\psi \right)\right)=0.
               \end{array}
  \right.
\end{equation}

In order to obtain the existence and uniqueness of solution $\psi$ for the equation in \eqref{e45}, we can  simplify the  last  equation in \eqref{e45} to
\begin{equation}\label{e46}
 (-\Delta)^s\psi-pU^{p-1}\psi +(V_1+V_2)\cdot\psi+M(\psi)=0,
\end{equation}
where
\begin{equation}\label{e47}
  V_1=-p(|U_{\ast}|^{p-1}-U^{p-1})\left(1-\displaystyle \sum_{j=1}^k\zeta_j \right),\ \ V_2=pU^{p-1}\left(\displaystyle \sum_{j=1}^k\zeta_j \right),
\end{equation}

\begin{equation}\label{e48}
  M(\psi)=-p|U_{\ast}|^{p-1}\displaystyle\sum_{j=1}^k\left(1-\zeta_j\right)\tilde{\phi}_j
                +\displaystyle\left(1-\sum_{j=1}^k\zeta_j\right)\left(E-N\left(\displaystyle\sum_{j=1}^k \tilde{\phi}_i+\psi \right)\right),
\end{equation}
and
\begin{equation}\label{e49}
N(\phi)=|U_{\ast}+\phi|^{p-1}(U_{\ast}+\phi)-|U_{\ast}|^{p-1}U_{\ast}-p|U_{\ast}|^{p-1}\phi.
\end{equation}

\begin{lemma}\label{l41}
Assume that  $\tilde{\phi}_j$  satisfy the conditions \eqref{e41}-\eqref{e44}. Then there exist constants $k_0, \rho_o, C$  such that for all $k\geq k_0$ and $\rho<\rho_0$ problem \eqref{e46} has a unique solution $\psi=\Psi(\phi_1)$ which satisfies the following  symmetrical properties

\begin{equation}\label{e410}
\psi_1(\bar{y}, y_3,  \cdots,y_l, \cdots, y_N)=\psi(\bar{y}, y_3, \cdots,-y_l, \cdots, y_N);
\end{equation}

\begin{equation}\label{e411}
\psi(y)=|y|^{2s-N}\psi(|y|^{-2}y);
\end{equation}

\begin{equation}\label{e412}
\psi(\bar{y}, y')=\psi(e^{\frac{2\pi j}{k}\sqrt{-1}}\bar{y}, y'),\ j=1,\cdots,k-1.
\end{equation}
Furthermore
\begin{equation*}
           \|\psi\|_{\ast} \leq C\left[k^{1+s-\frac{N}{2}-\frac{N}{q}}+\|\phi_1\|_{\ast}^2\right]
\end{equation*}
and the operator $\Psi$ satisfies the Lipschitz property
$$\|\Psi(\phi_1^1)-\Psi(\phi_1^2)\|_{\ast}\leq C\|\phi_1^1-\phi_1^2\|_{\ast}.$$
\end{lemma}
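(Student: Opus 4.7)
The plan is to recast \eqref{e46} as a fixed-point problem $\psi = -T(M(\psi))$ on the subspace of $\|\cdot\|_{\ast}$-bounded functions satisfying \eqref{e410}--\eqref{e412}, where $T$ is a right inverse of $L\psi := (-\Delta)^s\psi - pU^{p-1}\psi + (V_1+V_2)\psi$, and then invoke the Banach fixed-point theorem in a suitable ball. First I would verify that Lemma \ref{l31} is applicable on this symmetric subspace: the hypotheses \eqref{e410}--\eqref{e412} automatically enforce the orthogonality $\int v_l \, \cdot = 0$ for $l=1,\dots,N+1$. Indeed, $v_i = \partial_{y_i} U$ with $i\geq 3$ is odd in $y_i$ while \eqref{e410} demands even parity; the pair $(v_1,v_2)$ transforms non-trivially under the cyclic group of rotations of $\bar y$ by $2\pi/k$, so \eqref{e412} forces both components to average to zero; and $v_{N+1} = y\cdot\nabla U + \tfrac{N-2s}{2}U$ changes sign under the Kelvin transform whereas \eqref{e411} demands invariance.

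Next, I would absorb $V_1+V_2$ as a small perturbation of $L_0$. For $V_1$, in the exterior $\{1-\sum\zeta_j>0\}$ a mean-value expansion of $|U_{\ast}|^{p-1}-U^{p-1}$ yields $\|V_1\psi\|_{\ast\ast} = o(1)\|\psi\|_{\ast}$ as $k\to\infty$. For $V_2 = pU^{p-1}\sum\zeta_j$, although $U^{p-1}$ is not pointwise small on the support, the total measure of $\mathrm{supp}(\sum\zeta_j)$ is $O(k^{-(N-1)})$; a direct $L^q$ computation combined with $\|(1+|y|)^{N-2s}\psi\|_{\infty}\leq\|\psi\|_{\ast}$ gives $\|V_2\psi\|_{\ast\ast} = o(1)\|\psi\|_{\ast}$. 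A Neumann-series argument around the operator inverted in Lemma \ref{l31} then produces a bounded right-inverse $T$ of $L$ on the symmetric subspace with $\|T(h)\|_{\ast}\leq C\|h\|_{\ast\ast}$.

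I would next estimate $\|M(\psi)\|_{\ast\ast}$ term by term. The piece $(1-\sum\zeta_j)E$ is controlled by the exterior part of Lemma \ref{l22}, giving a bound $Ck^{1+s-N/2-N/q}$. The linear piece $-p|U_{\ast}|^{p-1}\sum_j(1-\zeta_j)\tilde\phi_j$ is supported where $|y-\xi_j|\gtrsim\eta/k\gg\mu$, so $\tilde\phi_j$ enjoys strong pointwise decay there; the resulting contribution is of order $k^{-(N-2s)/2}\|\phi_1\|_{\ast}$, absorbable into $Ck^{1+s-N/2-N/q}$ for $k$ large. Since $N(\cdot)$ vanishes to order $\min\{2,p\}+1$ at zero, $\|N(\sum_i\tilde\phi_i+\psi)\|_{\ast\ast}\leq C(\|\phi_1\|_{\ast}+\|\psi\|_{\ast})^{\min\{2,p\}}$. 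Assembling these,
\[
\|M(\psi)\|_{\ast\ast}\leq C\bigl[k^{1+s-N/2-N/q}+\|\phi_1\|_{\ast}^{2}+\|\psi\|_{\ast}^{2}\bigr].
\]

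On the ball $\{\|\psi\|_{\ast}\leq C_0(k^{1+s-N/2-N/q}+\|\phi_1\|_{\ast}^{2})\}$ the map $\psi\mapsto -T(M(\psi))$ is self-mapping and a contraction for $k$ large and $\rho$ small, since $M$ is locally Lipschitz in $\psi$ with constant $O(\rho+\|\psi\|_{\ast})$. The Banach fixed-point theorem yields a unique $\psi=\Psi(\phi_1)$ satisfying the claimed bound. Both $L$ and $M$ commute with reflections in $y_3,\dots,y_N$, with rotations of $\bar y$ by $2\pi/k$, and with the Kelvin transform, so uniqueness forces $\Psi(\phi_1)$ to inherit \eqref{e410}--\eqref{e412}. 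The Lipschitz estimate $\|\Psi(\phi_1^1)-\Psi(\phi_1^2)\|_{\ast}\leq C\|\phi_1^1-\phi_1^2\|_{\ast}$ follows by subtracting the two fixed-point equations and invoking the Lipschitz dependence of $M$ on its $\tilde\phi_j$-arguments. The main technical obstacle is the perturbation step for $V_2$: pointwise smallness is unavailable, so one must extract smallness from the shrinking support together with the weighted $L^q$ structure; the $V_1$ expansion also requires a H\"{o}lder-continuity refinement in the regime $p<2$ (equivalently $N\geq 6s$).
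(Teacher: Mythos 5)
Your proposal is correct and follows essentially the same route as the paper: the symmetries \eqref{e410}--\eqref{e412} are used to verify the orthogonality conditions $\int h v_l=0$, $l=1,\dots,N+1$, needed for Lemma \ref{l31} (parity for $l\geq 3$, the nontrivial rotation for $l=1,2$, Kelvin anti-invariance of $v_{N+1}$ for $l=N+1$), smallness of $V_1$ comes from the exterior expansion and of $V_2$ from the $O(k^{1-N})$ measure of its support, the terms of $M$ are estimated exactly as in the paper's $M_1,M_2,M_3$ decomposition, and the Banach fixed-point theorem plus uniqueness gives existence, the symmetries of $\psi$, and the Lipschitz dependence on $\phi_1$. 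The only cosmetic difference is that you absorb $V_1+V_2$ into the linear operator via a Neumann series, while the paper keeps $T=L_0^{-1}$ and puts $(V_1+V_2)\psi$ into the fixed-point right-hand side; these are equivalent, and your remark about the regime $p<2$ flags a genuine technicality that the paper itself glosses over.
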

\begin{proof}
Firstly,  consider linear problem    \eqref{e31}, and assume that $h$ satisfies the properties \eqref{e410}-\eqref{e412}, namely,
  \begin{equation}\label{e413}
h_1(\bar{y}, y_3,  \cdots,y_l, \cdots, y_N)=h(\bar{y}, y_3, \cdots,-y_l, \cdots, y_N);
\end{equation}

\begin{equation}\label{e414}
h(y)=|y|^{2s-N}h(|y|^{-2}y);
\end{equation}

\begin{equation}\label{e415}
h(\bar{y}, y')=h(e^{\frac{2\pi j}{k}\sqrt{-1}}\bar{y}, y'),\ j=1,\cdots,k-1.
\end{equation}

We will prove that
 \eqref{e31} has a unique bounded solution $\psi=T(h)$  and there exists a constant $C$ depending on $q$ and $N$ such that
 $$\|\psi\|_{\ast}\leq C\|h\|_{\ast\ast}$$
On  the basis of the results in Lemma \ref{l31}, we need to verify that
$$(h, v_l)=\int_{\mathbb{R}^N} hv_l=0,\ \forall l=1,2, \cdots, N+1.$$
  By the assumption \eqref{e413} that  $h$ is an even function, and the oddness of $v_l=\frac{\partial U}{\partial y_1}$, we easily know that $(h, v_l)=0$
  for $l=3,\cdots, N.$

  For the cases $l=1,2$, we consider the vector integral
  $$I=\int_{\mathbb{R}^N} h\left[\begin{array}{l}
                                   v_1 \\
                                   v_2
                                 \end{array}
  \right]dy=c_N\int_{\mathbb{R}^N} \frac{h(y)}{(1+|y|^2)^{\frac{N}{2}-1+s}}\cdot \left[\begin{array}{c}
                                   y_1 \\
                                   y_2
                                 \end{array}
  \right] dy.$$
 Set
 $$(\tilde{z}, z')=(e^{\frac{2\pi j}{k}\sqrt{-1}}\bar{y}, y').$$
  From the assumption \eqref{e414}, it is easy to see that
\begin{align}\label{e416}
 e^{\frac{2\pi j}{k}\sqrt{-1}}I&=c_N \int_{\mathbb{R}^N} \frac{h(y)}{(1+|y|^2)^{\frac{N}{2}-1+s}}\left[\begin{array}{l}
                                   y_1 \\
                                   y_2
                                 \end{array}
  \right]e^{\frac{2\pi j}{k}\sqrt{-1}} dy\\
  &=c_N\int_{\mathbb{R}^N} \frac{h(z)}{(1+|z|^2)^{\frac{N}{2}-1+s}}\cdot \left[\begin{array}{c}
                                   z_1 \\
                                   z_2
                                 \end{array}
  \right] dz\\
  &=I,
\end{align}
which implies $I=0$, since $e^{\frac{2\pi j}{k}\sqrt{-1}}\not =0$ for $k\geq 2$.

 For the case $l=N+1$, let us consider the following  function $I(\lambda)$, $\lambda>0$,
  $$I(\lambda)=\lambda^{\frac{N-2s}{2}}\int_{\mathbb{R}^N}U(\lambda y)h(y)dy.$$
  Using the transformation $z=|y|^{-2}y$, we obtain
  \begin{align}\label{e419}
   I(\lambda)& =\lambda^{\frac{N-2s}{2}}\int_{\mathbb{R}^N} U(\lambda y)h(y)dy \nonumber \\
    &= \lambda^{\frac{N-2s}{2}}\int_{\mathbb{R}^N} U(\lambda|y|^{-2} y)h(|y|^{-2}y)d(|y|^{-2}y)\nonumber \\
    & =\lambda^{-\frac{N-2s}{2}}\int_{\mathbb{R}^N} U(\lambda^{-1} y)h(y)dy\nonumber  \\
    &= I(\lambda^{-1}):=g(\lambda).
  \end{align}
  This implies
  $$ I'(1)=g'(1)=-\frac{1}{\lambda^2}I'\left(\frac{1}{\lambda}\right)\bigg|_{\lambda=1}=-I'(1).$$
  So $$0=I'(1)=(h, v_{N+1}).$$
  Therefore, by Lemma \ref{l31}, we have
   $$\|T(h)\|_{\ast}=\|\psi\|_{\ast}\leq C\|h\|_{\ast\ast}.$$

Next we will show that problem (\ref{e46}) has a unique solution. Taking $h=(V_1+V_2)\psi+M(\psi)$, then we write our problem
in fixed point form as
$$\mathcal{M}(\psi):=-T((V_1+V_2)+M(\psi))=\psi,\ \ \psi\in X,$$
where $X$ denotes the linear space with bounded norm $\|\cdot\|_{\ast}$ and satisfies all symmetry properties in Lemma \ref{l41}.
Now noting  that $$[(V_1+V_2\cdot\psi)+M(\psi)](y)=|y|^{-(N+2s)}[(V_1+V_2\cdot\psi)+M(\psi)]\left(|y|^{-2}y\right),$$
we can show that
 \begin{equation}\label{e420}
\psi_l(y)=\psi(\bar{y}, y_3,  \cdots,-y_l, \cdots, y_N), l=3,4,\cdots,N;
\end{equation}

\begin{equation}\label{e421}
\psi_{N+1}(y)=|y|^{2s-N}h(|y|^{-2}y);
\end{equation}
and
\begin{equation}\label{e422}
\psi^{j}(y)=\psi(e^{\frac{2\pi j}{k}\sqrt{-1}}\bar{y}, y'),\ j=1,\cdots,k-1
\end{equation}
satisfy problem \eqref{e31}.
Hence, by the unique result of Lemma \ref{e31}, we have
$$\psi=\psi_l=\psi^j=\psi_{N+1},$$
which are exactly the symmetries required in Lemma \ref{l41}.

The rest of this section is devoted to proving that $\mathcal{M}$ ia a contraction mapping. To do this, we must make
a series of estimate of $V_1, V_2, M$,  respectively.

Recall that $$V_1=-p(|U_{\ast}|^{p-1}-U^{p-1})\left(1- \sum_{j=1}^k\zeta_j \right);$$
and the assumptions of the cut-off functions $\zeta_j$ imply that $\mbox{supp}V_1\subset EX$

Now by taking the similar estimate as in the discussion of Step 1 in Lemma  \ref{l22}, for all $y\in EX$, there is a $s\in(0,1)$
such that
\begin{align}\label{e424}
  |V_1(y)\psi (y)|=& \left| V_1(y)\psi(y)(1+|y|^{N-2s})\frac{1}{1+|y|^{N-2s}}\right| \nonumber\\
  \leq  & C|V_1(y)U(y)|\cdot|(1+|y|^{N-2s})\psi| \nonumber\\
  \leq  & C|V_1(y)U(y)|\|\psi(y)\|_{\ast} \nonumber\\
  \leq  & C|U^{p-1}(y)-U_{\ast}^{p-1}(y)|\cdot\|\psi\|_{\ast} U(y)\nonumber\\
    \leq  & \left|U(y)-s \sum_{j=1}^kU_{j}(y)\right|^{p-2}\left[\sum_{j=1}^kU_{j}(y)\right]\cdot\|\psi\|_{\ast} U(y)\nonumber\\
\end{align}
Since $EX=A_1\cup A_2$, for $y\in A_1$, we get $ \mu^2+|y-\xi_j|^2\sim 1+|y|^2$. So
\begin{align}\label{e425}
  \sum_{j=1}^kU_{j}(y) &\leq  C k\mu ^{\frac{N-2s}{2}}U\leq   Ck^{3s+1-\frac{3N}{2}} U(y)\leq CU(y).
\end{align}
For $y\in A_2$, we obtain

\begin{align}\label{e426}
  \sum_{j=1}^kU_{j}(y)&=\sum_{j=1}^k\mu^{\frac{N-2s}{2}} \left(\frac{1}{\mu^2+|y-\xi_j|^2}\right)^{\frac{N-2s}{2}}\nonumber\\
  &\leq   Ck\cdot k^{\frac{N-2s}{2}}\mu^{\frac{N-2s}{2}}\leq Ck^{1+2s-N} \leq CU(2)\leq CU(y).
\end{align}
From \eqref{e425} and \eqref{e426}, we infer
\begin{equation}\label{e4265}
  \sum_{j=1}^kU_{j}(y)\leq CU(y), \mbox{for\ all\ } y\in EX.
\end{equation}
Using \eqref{e4265}, we can amplify \eqref{e424}, and obtain
\begin{align}\label{e427}
 |V_1(y)\psi(y)| &\leq C\|\psi\|_{\ast} U^{p-1}\left(\sum_{j=1}^kU_{j}(y)\right) \nonumber\\
   & \leq \|\psi\|_{\ast}\left(\frac{1}{(1+|y|^2)}\right)^{2s}\sum_{j=1}^k\frac{\mu^\frac{{N-2s}}{2}}{|y-\xi_j|^{N-2s}}
\end{align}
By  taking the similar estimate as in Step 1 in Lemma \ref{l22}, we see that
\begin{align*}
\|V_1\psi\|_{\ast\ast}\leq Ck^{1+s-\frac{N}{2}-\frac{N}{q}}.
\end{align*}
Now we turn  to  the estimate $V_2\psi$. Recall that
$$V_2=pU^{p-1}\left(\displaystyle \sum_{j=1}^k\zeta_j\right).$$
The  assumptions of the cut-off functions $\zeta_j$ imply that $\mbox{supp} V_2$ lies in the annular, that is,

 $$\mbox{supp} V_2\subset IN\subset \left\{y:||y|-\sqrt{1-\mu^2}|\leq \frac{\eta}{k}\right\}:=AN.$$
Therefore
\begin{align}\label{e428}
  \|V_2\psi\|_{\ast\ast} &\leq C\|\psi\|_{\ast}\sum_{j=1}^k\|U^p\zeta_j\|_{\ast\ast(|y-\xi_j|\leq \eta/k)} \nonumber\\
   & \leq \|\psi\|_{\ast}\cdot m(AN)\nonumber\\
     & \leq C \|\psi\|_{\ast}\cdot k^{1-N},
\end{align}
where $m(\cdot)$ denotes Lebesgue measure.

 We know that  $M$ is a nonlinear  operator, so the estimate of $M$ will be more complicated. For convenience sake, we introduce
 the following notations.

 Set

$$M_1=-p|U_{\ast}|^{p-1}\displaystyle\sum_{j=1}^k\left(1-\zeta_j\right)\tilde{\phi}_j,$$

$$M_2=\left(1-\sum_{j=1}^k\zeta_j\right)E,$$
and
$$
  M_3(\psi)=-\left(1-\sum_{j=1}^k\zeta_j\right)
                N\left(\displaystyle\sum_{j=1}^k \tilde{\phi}_i+\psi \right).
$$
Then the nonlinear operator can be denoted by
$$M(\psi)=M_1+M_2+M_3(\psi).$$
For $M_1$, applying the estimate of the exterior region and \eqref{e41}, \eqref{e42} and \eqref{e44}, we get
\begin{align}\label{e429}
  \|M_1\|_{\ast} &\leq C\sum_{j=1}^k \||U_{\ast}|^{p-1}\tilde{\phi}_i\|_{\ast\ast(|y-\xi_j|> \eta/k)}
    \leq Ck^{1+s-\frac{N}{2}-\frac{N}{q}}.
\end{align}
The discussion for $M_2$ is the same as that for the error term $E$.

For $M_3(\psi)$, we easily see that $\mbox{supp} M_3(\psi)\subset EX.$
By the definition of $N$ and the mean value theorem, there exist $ s, t\in(0,1)$ such that
\begin{align*}
  &\left|N\left(\sum_{j=1}^k \tilde{\phi}_j+\psi\right)\right|  \\
  &=\left|\left|U_{\ast}+\sum_{j=1}^k \tilde{\phi}_i+\psi \right|^{p-1}\cdot \left(U_{\ast}+\sum_{j=1}^k \tilde{\phi}_i+\psi \right)
  -|U_{\ast}|^{p-1} U_{\ast}-p|U_{\ast}|^{p-1}\left(\sum_{j=1}^k \tilde{\phi}_i+\psi \right)\right|\\
  &=p\left|\left|U_{\ast}+s\left(\sum_{j=1}^k \tilde{\phi}_i+\psi\right) \right|^{p-1}\left(\sum_{j=1}^k \tilde{\phi}_i+\psi \right)
  -|U_{\ast}|^{p-1}\left(\sum_{j=1}^k \tilde{\phi}_i+\psi \right)\right|\\
  &=sp\left|U_{\ast}+st\left(\sum_{j=1}^k \tilde{\phi}_i+\psi\right) \right|^{p-2}
  \left|\sum_{j=1}^k \tilde{\phi}_i+\psi \right|^2.
\end{align*}
In the exterior region, using the discussion of Step 1 in Lemma \ref{l22}, we infer
$$\left|\sum_{j=1}^k \tilde{\phi}_i(y)\right|\leq C\|\phi_1\|_{\ast}U(y)\cdot\left(\sum_{j=1}^k\frac{\mu^{\frac{N-2}{2}}}{|y-\xi_j|^{N-2s}}\right).$$
So
\begin{align*}
  \|M_3(\psi)\|_{\ast\ast} &=\left\|\left(1-\sum_{j=1}^k\zeta_j\right)N\left(\sum_{j=1}^k \tilde{\phi}_i+\psi \right)\right\|_{\ast\ast}  \\
   & \leq C\left\|N\left(\sum_{j=1}^k \tilde{\phi}_i+\psi \right)\right\|_{\ast\ast(EX)}\\
   &\leq C\left\|\left[|U_{\ast}|+\left|\sum_{j=1}^k \tilde{\phi}_i\right|+|\psi| \right]^{p-2}\left[\left|\sum_{j=1}^k \tilde{\phi}_i\right|^2+|\psi|^2 \right]\right\|_{\ast\ast(EX)}\\
   &\leq C\|\phi_1\|_{\ast}^2\left\|U^{p}\cdot \sum_{j=1}^k\frac{\mu^{\frac{N-2}{2}}}{|y-\xi_j|^{N-2s}} \right\|_{\ast\ast(EX)}+C\|\psi\|_{\ast}^2\|U^p\|_{\ast\ast(EX)}\\
    & \leq C\|\phi_1\|_{\ast}^2k^{1+s-\frac{N}{2}-\frac{N}{q}}+C\|\psi\|_{\ast}^2.
\end{align*}
Combining the above estimates with respect to $M_1, M_2$ and $M_3$, one has
\begin{align*}
  \|M(\psi)\|_{\ast\ast} &\leq  \|M_1\|_{\ast\ast}+ \|M_2\|_{\ast\ast}+ \|M_3\|_{\ast\ast} \\
   &   \leq Ck^{1+s-\frac{N}{2}-\frac{N}{q}} +C\|\phi_1\|_{\ast}^2k^{1+s-\frac{N}{2}-\frac{N}{q}}+C\|\psi\|_{\ast}^2 .
\end{align*}
Next we will use Banach fixed point theorem to prove Lemma \ref{l41}. So we must show that $\mathcal{M}$ is a contraction mapping from the small ball
in $X$  to the ball itself.

By the mean value theorem, there exist some $s,t\in (0,1)$ such that
\begin{align*}
  \|M(\psi_1)-M(\psi_2)\|_{\ast\ast} &=  \|M_3(\psi_1)-M_3(\psi_2)\|_{\ast\ast}  \\
   & \leq C \left\|N\left(\sum_{j=1}^k \tilde{\phi}_j+\psi_1\right)-N\left(\sum_{j=1}^k \tilde{\phi}_j+\psi_2\right)\right\|_{\ast\ast(EX)} \\
   &\leq \left\|p\left|U_{\ast}+\sum_{j=1}^k \tilde{\phi}_j +s(\psi_1-\psi_2)\right|^{p-1}(\psi_1-\psi_2)-p|U_{\ast}|^{p-1}(\psi_1-\psi_2)\right\|_{\ast\ast(EX)}\\
   &\leq \left\|\left|U_{\ast}+t\sum_{j=1}^k \tilde{\phi}_j +st(\psi_1-\psi_2)\right|^{p-2}\left|\sum_{j=1}^k \tilde{\phi}_j+s(\psi_1-\psi_2)\right|\cdot|\psi_1-\psi_2|\right\|_{\ast\ast(EX)}\\
   &\leq C(\|\phi_1\|_{\ast}+\|\psi_1-\psi_2\|_{\ast})\|\psi_1-\psi_2\|_{\ast}\cdot\|U^p\|_{\ast\ast(EX)}\\
   &\leq C\|\psi_1-\psi_2\|_{\ast(EX)}.
\end{align*}
Then, we have
\begin{align*}
  \|\mathcal{M}(\psi_1-\psi_2)\| &= \|-T[(V_1+V_2)(\psi_1-\psi_2)+(M(\psi_1-\psi_2))]\|_{\ast} \\
   & \leq C\|((V_1+V_2)(\psi_1-\psi_2)\|_{\ast\ast}+C\|(M(\psi_1)-M(\psi_2))]\|_{\ast\ast}\\
   &\leq  C(k^{1-\frac{N}{q}}+\rho)\|\psi_1-\psi_2\|_{\ast} .
\end{align*}
Let $k_0$ be a  large positive  integer  and   $\rho_0$  be  small enough such that for each $k>k_0$ and $\rho<\rho_0$
\begin{equation*}
              C(k^{1+s-\frac{N}{2}-\frac{N}{q}}+\rho)<1.
\end{equation*}
Finally, we use Banach fixed point theorem to complete proof.
\end{proof}

In the following, we will study the first series in \eqref{e45}:
$$ (-\Delta)^s\tilde{\phi}_j-p|U_{\ast}|^{p-1}\zeta_j\tilde{\phi}_j+\zeta_j\left[-p|U_{\ast}|^{p-1}\psi+E-N\left(\tilde{\phi}_j+\displaystyle\sum_{i\not=j} \tilde{\phi}_i+\psi \right)\right]=0, \ j=1,\cdots,k.$$
In fact, by the assumptions \eqref{e41} and  \eqref{e42}, we can make  the changing of variables to simplify the above equations as a single equation:
\begin{equation}\label{e430}
   (-\Delta)^s\tilde{\phi}_1-p|U_{\ast}|^{p-1}\zeta_1\tilde{\phi}_1+\zeta_1\left[-p|U_{\ast}|^{p-1}\psi+E-N\left(\tilde{\phi}_1+\displaystyle\sum_{i\not=1} \tilde{\phi}_i+\psi \right)\right]=0, \ j=1,\cdots,k.
\end{equation}
For convenience sake, we introduce the following notations:
$$\mathcal{N}(\phi_1):=p(|U_{1}|^{p-1}-|U_{\ast}|^{p-1}\zeta_1)\tilde{\phi}_1+\zeta_1\left[-p|U_{\ast}|^{p-1}\psi+E-N\left(\tilde{\phi}_1+\displaystyle\sum_{i\not=1} \tilde{\phi}_i+\psi \right)\right],$$
$$\tilde{h}:=\zeta_1E+\mathcal{N}(\phi_1).$$
We easily see that $\tilde{h}$ satisfy the following properties:
  \begin{equation}\label{e431}
\tilde{h}(y_1, y_2,  \cdots,y_l, \cdots, y_N)=\tilde{h}(y_1, y_2, \cdots,-y_l, \cdots, y_N), l=2,3,\cdots,N;
\end{equation}

\begin{equation}\label{e432}
\tilde{h}(y)=|y|^{-2s-N}\tilde{h}(|y|^{-2}y).
\end{equation}
Then Eq. \eqref{e430} can be reduced to

\begin{equation}\label{e433}
[(-\Delta)^s-p|U_1|^{p-1}\zeta_1]\tilde{\phi_1}+\tilde{h}=0
\end{equation}
According to the definition of $\mu$,  we see that $\mu$ is  related to $\delta$.  Hence
\begin{equation*}
  c_{N+1}(\delta):=\frac{\int_{\mathbb{R}^N}(\zeta_1E+\mathcal{N}(\phi_1))\tilde{v}_{N+1}}{\int_{\mathbb{R}^N}U_1^{p-1}\tilde{v}_{N+1}^2}
\end{equation*}
is also related to $\delta$. Using translating and scaling, we easily know that Eq. \eqref{e433} is equivalent to
\eqref{e31}.  In order to obtain the unique existence of \eqref{e433}, by the results in Lemma \ref{l31}, we need to verify that
$$\int_{\mathbb{R}^N} \tilde{h}\tilde{v}_l=\int_{\mathbb{R}^N} hv_l=0,\ \forall l=1,2, \cdots, N+1.$$
Moreover, by the definition of $c_{N+1}(\delta)$, we get
$$\int_{\mathbb{R}^N} \tilde{h}\tilde{v}_{N+1}=0 \Leftrightarrow \int_{\mathbb{R}^N} hv_{N+1}=0 \Leftrightarrow c_{N+1}(\delta)=0.$$
On basis of the discussion  in Lemma \ref{l41}, we can also obtain
$$\int_{\mathbb{R}^N} \tilde{h}\tilde{v}_l=\int_{\mathbb{R}^N} hv_l=0,\ \forall l=1,2, \cdots, N.$$
Hence, we only need to prove that for  some $\delta_0$,  $c_{N+1}(\delta_0)=0$.
\begin{lemma}\label{l42}
 The $\int_{\mathbb{R}^N}\tilde{h}\tilde{v}_{N+1}$ can be denoted by the following form
 \begin{equation*}
   \int_{\mathbb{R}^N}\tilde{h}\tilde{v}_{N+1}=  C_N\frac{\delta}{k^{N-2s}} [\delta a_N-1]+\frac{1}{k^{N-s}}\Theta_{k}(\delta).
 \end{equation*}
Here $\Theta_{k}(\delta)$ is continuous  related to  $\delta$ and uniformly bounded as $k\rightarrow \infty$, $C_N=p\int_{\mathbb{R}^N}U^{p-1}v_{N+1}$, with
 the positive number
  \begin{equation*}
   a_{N}= 2^{\frac{N-2s}{2}}\lim_{k\rightarrow\infty}\frac{1}{k^{N-2s}}\sum_{j=2}^k\frac{1}{|\xi_1-\xi_j|^{N-2s}}.
 \end{equation*}
\end{lemma}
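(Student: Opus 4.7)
I would split
\begin{equation*}
\int_{\mathbb{R}^N}\tilde h\,\tilde v_{N+1} \;=\; \int_{\mathbb{R}^N}\zeta_1 E\,\tilde v_{N+1} \;+\; \int_{\mathbb{R}^N}\mathcal{N}(\phi_1)\,\tilde v_{N+1},
\end{equation*}
locate the main asymptotic contribution in the $\zeta_1 E$ integral, and absorb the second term into $k^{-(N-s)}\Theta_k(\delta)$. For the remainder estimate I would invoke Lemma \ref{l41} to control $\psi=\Psi(\phi_1)$ by $k^{1+s-N/2-N/q}+\|\phi_1\|_*^2$, use the a priori hypothesis $\|\phi_1\|_*\le\rho\ll 1$, and observe that every summand of $\mathcal{N}(\phi_1)$ either carries the weight $|U_1|^{p-1}-|U_*|^{p-1}\zeta_1$ (which vanishes on the bulk of the support of $\tilde v_{N+1}$) or is at least quadratic in the small perturbations $\tilde\phi_j,\psi$. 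Combined with the Lemma \ref{l22} bound $\|E\|_{**}\le Ck^{-N/q-2s}$ and the concentration of $\tilde v_{N+1}$ near $\xi_1$, this delivers the required $k^{-(N-s)}$ decay.

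\textbf{Expansion of $E$ on the support of $\zeta_1$.} On $\{|y-\xi_1|\le\eta/k\}$ I would decompose $U_* = -U_1 + V$ with $V := U - \sum_{j\neq 1}U_j$ smooth and uniformly bounded while $U_1$ is concentrated and dominant. Taylor expanding the nonlinearity in the small ratio $V/U_1$,
\begin{equation*}
|U_*|^{p-1}U_* \;=\; -(U_1 - V)^p \;=\; -U_1^p + p\,U_1^{p-1}V + O(U_1^{p-2}V^2),
\end{equation*}
and combining with $(-\Delta)^s U_* = U^p - U_1^p - \sum_{j\neq 1}U_j^p$ yields
\begin{equation*}
E \;=\; -p\,U_1^{p-1}V \;+\; \Bigl(U^p - \textstyle\sum_{j\neq 1}U_j^p\Bigr) \;+\; O(U_1^{p-2}V^2).
\end{equation*}
The bounded term $U^p-\sum_{j\neq 1}U_j^p$ and the quadratic correction, once tested against the concentrated $\tilde v_{N+1}$, carry a strictly smaller power of $\mu$ than the leading piece and fall into $\Theta_k(\delta)/k^{N-s}$; the pointwise bound $|\tilde E_1(z)|\le C\mu^{(N-2s)/2}/(1+|z|^{4s})$ from the proof of Lemma \ref{l22} governs the tails.

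\textbf{Extraction of $\delta a_N-1$ and the main obstacle.} After the change of variables $y=\xi_1+\mu z$, the leading term reduces, up to a universal constant, to
\begin{equation*}
V(\xi_1+\mu z)\cdot p\int_{\mathbb{R}^N}U^{p-1}(z)v_{N+1}(z)\,dz,
\end{equation*}
and the crucial scalar factor is nonzero: direct integration by parts gives $p\int U^{p-1}v_{N+1} = -\tfrac{p(N-2s)^2}{2(N+2s)}\int U^p$, which defines the constant $C_N$ in the statement. Replacing $V(\xi_1+\mu z)$ by $V(\xi_1)$ to leading order and computing separately
\begin{equation*}
U(\xi_1) \;=\; c_{N,s}(2-\mu^2)^{-(N-2s)/2}, \qquad \sum_{j\neq 1}U_j(\xi_1) \;=\; c_{N,s}\,\mu^{(N-2s)/2}\sum_{j\neq 1}|\xi_1-\xi_j|^{-(N-2s)} + \text{lower order},
\end{equation*}
then invoking the defining limit $\sum_{j\neq 1}|\xi_1-\xi_j|^{-(N-2s)} = 2^{-(N-2s)/2}a_N k^{N-2s}(1+o(1))$ together with $\mu=\delta^{2/(N-2s)}k^{-3}$, I would match the $U(\xi_1)$ contribution to $-C_N\delta/k^{N-2s}$ and the $\sum_{j\neq 1}U_j(\xi_1)$ contribution to $+C_N a_N\delta^2/k^{N-2s}$, producing the announced form $C_N\tfrac{\delta}{k^{N-2s}}[\delta a_N - 1]$. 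All Taylor remainders in $\mu z$, the subleading pieces $U^p-\sum_{j\neq 1}U_j^p$ and $O(U_1^{p-2}V^2)$, the discrepancy in the $a_N$ limit, and the nonlinear $\mathcal{N}(\phi_1)$ contribution are bundled into $\Theta_k(\delta)$; its continuity in $\delta$ is inherited from the smooth dependence of every ingredient (including the operator $\Psi$ of Lemma \ref{l41}) on $\mu=\mu(\delta)$. The principal technical obstacle is the careful $\mu$- and $k$-power bookkeeping: each competitor to the leading term must be shown to carry a strictly smaller factor than $\delta/k^{N-2s}$, which hinges on correctly matching the concentration scale of $\tilde v_{N+1}$ to the large rescaled cut-off ball $|z|\le\eta/(k\mu)$, on the nonvanishing of $\int U^{p-1}v_{N+1}$, and on controlling the non-uniformity of $V(\xi_1+\mu z)$ across that ball via the fast decay of $U^{p-1}v_{N+1}$.
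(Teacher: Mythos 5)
Your proposal follows essentially the same route as the paper: split $\tilde h$ into the localized error $\zeta_1 E$ plus the remainder $\mathcal{N}(\phi_1)$, Taylor-expand $|U_*|^{p-1}U_*$ about the dominant bubble $-U_1$ near $\xi_1$, rescale against the concentrated $\tilde v_{N+1}$ to extract the two leading interactions (the one with $U$ giving the ``$-1$'', the one with the other bubbles $U_j$ giving ``$\delta a_N$''), and push everything else --- outer regions, quadratic remainders, and the $\mathcal{N}(\phi_1)$ contribution controlled through Lemmas \ref{l22} and \ref{l41} --- into $k^{-(N-s)}\Theta_k(\delta)$, which is exactly the paper's Steps 1--3. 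The one caveat is a bookkeeping issue you inherit from the paper itself: with $\mu=\delta^{2/(N-2s)}k^{-3}$ the two leading terms scale like $\delta k^{-3(N-2s)/2}$ and $\delta^2 a_N k^{-2(N-2s)}$ rather than both like $\delta k^{-(N-2s)}$, so the stated form really requires $\mu^{(N-2s)/2}=\delta k^{-(N-2s)}$ (i.e.\ $\mu=\delta^{2/(N-2s)}k^{-2}$); this inconsistency is present in the paper's own statement and proof and is not a defect of your argument.
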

Obviously, by the above lemma, it is easy to see that for $\delta$ small enough, $\int_{\mathbb{R}^N}\tilde{h}\tilde{v}_{N+1}<0$, while for $\delta$
large enough,  $\int_{\mathbb{R}^N}\tilde{h}\tilde{v}_{N+1}>0$. So, using the continuity property related to $\delta$, there exists $\delta_0>0$ such that
 $\int_{\mathbb{R}^N}\tilde{h}\tilde{v}_{N+1}=0$. The proof of this lemma is similar to Claim 1-4 in \cite{r2}.
\begin{proof}
Note that $\tilde{h}:=\zeta_1E+\mathcal{N}(\phi_1)=E+(\zeta_1-1)E+\mathcal{N}(\phi_1)$. Then we will  take three steps to discuss these terms $E$, $(\zeta_1-1)E$ and $\mathcal{N}(\phi_1)$, respectively.

\noindent\textbf{Step 1}: We  will estimate the term $\int_{\mathbb{R}^N} E\tilde{v}_{N+1}$.  Let $\eta>0$ be a small number, independent of $k$. We set
\begin{equation}\label{eq1}
  \int_{\mathbb{R}^N} E \tilde{v}_{N+1} = \int_{B_1} E \tilde{v}_{N+1} +\int_{\mathbb{R}^N\setminus \cup_{j=1}^k B_j}E\tilde{v}_{N+1}+\sum_{j\not =1}\int_{ B_j}E\tilde{v}_{N+1}.
\end{equation}
where $B_j=B(\xi_j, \frac{\eta}{k})$, $\tilde{v}_{N+1}(y):=\mu^{-\frac{n-2s}{2}}v_{n+1}(\mu^{-1}(y-\xi_1))$.

Let us consider the first term in \eqref{eq1}. By changing the variables $x=\mu y+\xi_1$, we obtain
 $$\int_{B_1} E\tilde{v}_{n+1}=\int_{B(0, \frac{\eta}{\mu k})}\tilde{E}_1v_{n+1}(y)dy,$$
where $$\tilde{E}_1(y)=\mu^{\frac{n+2s}{2}} E(\xi_1+\mu y).$$
In the region $|y|\leq \frac{\eta}{\mu k}$, using  the expansion  \eqref{e28}, we have
\begin{align}\label{eq2}
  \int_{B(0, \frac{\eta}{\mu k})}\tilde{E}_1v_{N+1}(y)dy & = -p\sum_{j\not=1}   \int_{B(0, \frac{\eta}{\mu k})} U^{p-1}U(y-\mu^{-1}(\xi_j-\xi_1))v_{N+1}\nonumber \\
   &\ \ \ \ + p\mu^{\frac{n-2s}{2}}  \int_{B(0, \frac{\eta}{\mu k})} U^{p-1}U(\xi_1+\mu y)v_{N+1}dy\nonumber\\
   & \ \ \ \  +p\int_{B(0, \frac{\eta}{\mu k})}\left[(U+sV)^{p-1}-U^{p-1}\right]Vv_{n+1} dy  \nonumber\\
   &\ \ \ \   +\sum_{j\not=1}   \int_{B(0, \frac{\eta}{\mu k})} U^p(y-\mu^{-1}(\xi_j-\xi_1))v_{N+1}  \nonumber\\
   &\ \ \ \  -\mu^{\frac{N+2s}{2}} \int_{B(0, \frac{\eta}{\mu k})} U^p(\xi_j+\mu y) v_{N+1},
\end{align}
where $$V=\left(-\sum_{j\not=1} U(y-\mu^{-1}(\xi_j-\xi_1)+\mu^{\frac{N-2s}{2}}U(\xi_1+\mu y))\right).$$
We  see that, using the Taylor expansion, for $j\not= 1$,
$$\int_{B(0, \frac{\eta}{\mu k})} U^{p-1}U(y-\mu^{-1}(\xi_j-\xi_1))v_{N+1}=2^{\frac{N-2s}{2}}
C_1\mu^{N-2s}\frac{1}{|\hat{\xi_j}-\hat{\xi_1}|^{N-2s}}(1+(\mu k)^2\Theta_k(\delta)),$$
where $C_N=\int_{\mathbb{R}^N}U^{p-1}v_{N+1}$ and $\hat{\xi}_1=(1,0,\cdots, 0)$ and $\hat{\xi}_j=e^{\frac{2\pi(j-1)}{k}}\hat{\xi}_1$. Moreover,

$$\mu^{\frac{N-2s}{2}}\int_{B(0, \frac{\eta}{\mu k})} U^{p-1}U(\xi_1+\mu y)v_{N+1}dy=C_1\mu^{\frac{N-2s}{2}}(1+(\mu k)^2)\Theta_k(\delta)).$$
For the third term in \eqref{eq2}, using  the inequality $|(a+b)^s-a^s|\leq C|b|^s$, we have
\begin{align}\label{eq3}
  &\int_{B(0, \frac{\eta}{\mu k})}\left[(U+sV)^{p-1}-U^{p-1}\right]Vv_{n+1} dy\nonumber\\ &\leq \sum_{j\not=1}\left| \int_{B(0, \frac{\eta}{\mu k})} U^{p-1}(y-\mu^{-1}(\xi_j-\xi_1))v_{N+1}\right|\nonumber\\
  & \leq C \sum_{j\not=1}\frac{\mu^{N+2s}}{|\hat{\xi_j}-\hat{\xi_1}|^{N-2s}}\int_{B(0, \frac{\eta}{\mu k})}\frac{1}{(1+|y|)^{N-2s}}\nonumber\\
  &\leq  C(\mu k)^{-2s} \sum_{j\not=1}\frac{\mu^{N+2s}}{|\hat{\xi_j}-\hat{\xi_1}|^{N-2s}}.
\end{align}
For the last term in \eqref{eq2}, we estimate
$$\left|\mu^{\frac{N+2s}{2}}\int_{B(0, \frac{\eta}{\mu k})} U^{p}(\xi_j+\mu y)v_{N+1}dy\right|\leq C
\mu^{\frac{N+2s}{2}} \int_{B(0, \frac{\eta}{\mu k})}\frac{1}{(1+|y|)^{N-2s}}\leq C\mu^{\frac{N-2s}{2}}k^{-2s}.$$
Now for the second term in \eqref{eq1}, by H\"{o}lder inequality and estimate of error term, we  get
\begin{align*}
  \int_{EX}E\tilde{v}_{N+1} \leq & C\|(1+|y|)^{N+2s-\frac{2N}{q}}E\|_{L^q(EX)} \cdot \|(1+|y|)^{-N-2s-\frac{2N}{q}}\tilde{v}_{N+1}\|_{L^{\frac{q}{q-1}}(EX)}\\
  \leq &C\|(1+|y|)^{N+2s-\frac{2N}{q}}E\|_{L^q(EX)} \cdot \mu^{\frac{N-2s}{2}}\left(\int_{EX} \left[\frac{|y-\xi_1|^{2s-N}}{(1+|y|)^{N+2s-\frac{2N}{q}})}\right]^{\frac{q}{q-1}}\right)^{\frac{q-1}{q}}\\
\leq&   Ck^{-2N-1+4s}.
\end{align*}
Now let us consider the last term in $\eqref{eq1}$. Set $\tilde{E}_j=\mu^{\frac{N+2s}{2}}E(\xi_j+\mu y), j\not=1$. Performing the change of variables $x=\mu y+\xi_j$.
\begin{align*}
  \left|\int_{ B_j}E\tilde{v}_{N+1}\right| & =\left|\mu^{\frac{N-2s}{2}}\int_{B(0,\eta/(\mu k))}\tilde{E}_j\tilde{v}_{N+1}(\mu y+\xi_j)dy\right| \\
   & \leq C\mu^{\frac{N-2s}{2}}\|(1+|y|)^{N+2s-\frac{2N}{q}}\tilde{E}_j\|_{L^q(B(0,\eta/(\mu k)))}\\
   &\ \ \times\|(1+|y|)^{-N-2s+\frac{2N}{q}}\mu^{-\frac{N-2s}{2}}v_{N+1}(y+\mu^{-1}(\xi_j-\xi_1))\|_{L^{\frac{q}{q-1}}(B(0,\eta/(\mu k)))}\\
   &\leq C\mu^{\frac{N-2s}{2}} \mu^{\frac{N-2s}{2}}(\mu k)^{-N+2s+\frac{N}{q}}\cdot \frac{\mu^{\frac{N-2s}{2}}}{|\xi_j-\xi_1|^{N-2s}}
   \left(\int_1^{\eta/(\mu k)}\frac{t^{N-1}dt}{t^{(N+2s-\frac{2N}{q})\frac{q}{q-1}}}\right)^{\frac{q-1}{q}}\\
   &\leq   C\mu^{N-2s} (\mu k)^{-N+2s+\frac{N}{q}}\frac{\mu^{\frac{N-2s}{2}}}{|\xi_j-\xi_1|^{N-2s}}(\mu k)^{2s-\frac{N}{q}}.
\end{align*}
So we conclude that
\begin{equation*}
  \left|\sum_{j\not =1}\int_{ B_j}E\tilde{v}_{N+1}\right| \leq \frac{\mu^{\frac{N-2s}{2}}}{(\mu k)^{N-4s}}\left[\mu^{N-2s}\sum_{j\not=1} \frac{1}{|\xi_j-\xi_1|^{N-2s}}\right]\leq \frac{Ck^{-2N+s}}{|\xi_j-\xi_1|^{N-2s}}.
\end{equation*}

\noindent\textbf{Step 2}: For  $(\zeta_1-1)E$, we observe that
\begin{align*}
  \left|\int_{\mathbb{R}^N}(\zeta_1-1)E\tilde{v}_{N+1}\right| & \leq C \left|\int_{|x-\xi_1|>\eta/k}E\tilde{v}_{N+1}\right|\\
   & =C \left|\int_{EX}E\tilde{v}_{N+1}\right|+C \sum_{j\not=1}\left|\int_{|x-\xi_j|<\eta/k}E\tilde{v}_{N+1}\right|
\end{align*}
In the exterior region $EX$, by \eqref{e5}, we observe
$$|E(y)|\leq C\frac{\mu^{\frac{N-2s}{2}}}{(1+|y|^2)^{2s}} \sum_{j=1}^{k}\frac{1}{|y-\xi_j|^{N-2s}},$$
where $C$ is a positive constant, independent of $k$. Moreover, in the exterior region, one has
 $$\tilde{v}_{N+1}\leq C\frac{\mu^{\frac{N-2s}{2}}}{|x-\xi_1|^{N-2}}.$$
  So  we easily see  that
$$\left|\int_{EX}E\tilde{v}_{N+1}\right| \leq Ck\mu^{N-2s} \int_{\eta/k}^1 \frac{t^{N-1}}{t^{2N-4s}}dt\leq Ck\mu^{N-2s}k^{N-4s}$$
and
 \begin{equation}\label{eq4}
   \left|\int_{EX}E\tilde{v}_{N+1}\right| =
 \frac{1}{k^{2N-2s-1}}\Theta_{k}(\delta).
 \end{equation}
On the other hand, by changing the variables, $\mu y=x-\xi_j$, we have
$$\int_{|x-\xi_j|<\eta/k}E\tilde{v}_{N+1}=\mu^{\frac{N+2s}{2}}\int_{|y|\leq \eta/(k\mu)} E(\xi_j+\mu y)v_{N+1}(y+\mu^{-1}(\xi_j-\xi_1)).$$
 Note that the argument of Step 2 in Lemma  \ref{l22} implies that
 $$\tilde{E}_j=\mu^{\frac{N+2s}{2}}E(\xi_j+\mu y)\leq C\frac{\mu^{\frac{N-2s}{2}}}{1+|y|^{4s}}.$$
Furthermore, in this region,  we easily obtain
$$|v_{N+1}(y+\mu^{-1}(\xi_j-\xi_1))|\leq C\frac{\mu^{N-2s}k^{N-2s}}{|j-1|^{N-2s}}.$$
Hence, we have
$$\sum_{j\not=1}\left|\int_{|x-\xi_j|<\eta/k}E\tilde{v}_{N+1}\right|\leq k
\mu^{\frac{N-2s}{2}}(k\mu)^{N-2s}\int_{|y|<\eta/(k\mu)}\frac{\mu^{\frac{N-2s}{2}}}{1+|y|^{4s}}dy\leq C k^{-3N+2s+1}.$$

\noindent\textbf{Step 3}:  By the change of variable $x=\mu y+\xi_1$, we have
\begin{align*}
  \int_{\mathbb{R}^N}\mathcal{N}(\phi_1)\tilde{v}_{N+1}dx & = \int_{\mathbb{R}^N}\mathcal{N}(\phi_1)\mu^{-\frac{N-2s}{2}}v_{N+1}(\mu^{-1}(x-\xi_1))dx \\
   & = \int_{\mathbb{R}^N}\mathcal{N}(\phi_1)(\mu y+\xi_1)\mu^{\frac{N+2s}{2}}v_{N+1}(y)dx \\
   &\leq C\|\mu^{\frac{N+2s}{2}}\mathcal{N}(\phi_1)(\mu y+\xi_1) \|_{\ast\ast} \left(\int_{\mathbb{R}^N} \frac{1}{(1+|y|)^{2N}}\right)^{\frac{q-1}{q}}.
\end{align*}
Using the estimates of $f_1, f_2, f_3$ and $f_4$ in Lemma \ref{l43}, we have
\begin{align*}
  &\|\mu^{\frac{N+2s}{2}}\mathcal{N}(\phi_1)(\mu y+\xi_1) \|_{\ast\ast} \left(\int_{\mathbb{R}^N} \frac{1}{(1+|y|)^{2N}}\right)^{\frac{q-1}{q}}\\
   &\leq  Ck^{-\frac{3N}{2}-\frac{N}{q}-5s}(k^{1+s-\frac{N}{2}-\frac{N}{q}}+\|\phi_1\|_{\ast}^2),
\end{align*}
where $C$ is a positive constant, independent of $k$. Recalling that
$\mu=\delta^{\frac{2}{N-2s}}k^{-3}$
and  combining the obtained estimates, we complete the proof.
\end{proof}

\begin{lemma}\label{l43}
For $\tilde{h}$ given above, suppose that
$$h(y):=\mu^{\frac{N+2s}{2}}\tilde{h}(\xi_1+\mu y)$$
satisfies  $\|h\|_{\ast\ast}<\infty$. Then \eqref{e433} has a unique solution $\tilde{\phi}:=\tilde{T}(\tilde{h})$ which satisfies the properties \eqref{e42} and \eqref{e43}  and
$$\int_{\mathbb{R}^N}\phi U^{p-1}v_{N+1}=0,\ \mbox{with}\ \|\phi\|_{\ast}\leq C\|h\|_{\ast\ast},$$
where $\phi(y)=\mu^{\frac{N-2s}{2}}\tilde{\phi}(\xi_1+\mu y).$
\end{lemma}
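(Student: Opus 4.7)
The plan is to reduce Eq.~\eqref{e433} to the setting of Lemma~\ref{l31} by unscaling the bubble $U_1$ back to the standard bubble $U$. Setting $\phi(y) := \mu^{\frac{N-2s}{2}}\tilde{\phi}(\xi_1+\mu y)$ and $h(y) := \mu^{\frac{N+2s}{2}}\tilde{h}(\xi_1+\mu y)$, the homogeneity of $(-\Delta)^s$ together with the identity $U_1(\xi_1+\mu y) = \mu^{-\frac{N-2s}{2}}U(y)$ converts \eqref{e433} into
\begin{equation*}
(-\Delta)^s\phi - p\,U^{p-1}\,\zeta_1^{\mu}\,\phi + h = 0,\qquad \zeta_1^{\mu}(y) := \zeta_1(\xi_1 + \mu y).
\end{equation*}
Because $\mu = \delta^{2/(N-2s)}k^{-3}$ forces $k\mu \to 0$, the support radius $\eta/(k\mu)$ of $\zeta_1^{\mu}$ diverges, so on compact sets $\zeta_1^{\mu}\equiv 1$ for $k$ large, while the defect term $pU^{p-1}(1-\zeta_1^{\mu})\phi$ is supported where $U^{p-1}$ is of order $(k\mu)^{4s}$ and therefore acts as a vanishing perturbation of $L_0$.

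Next I would transfer the symmetries \eqref{e431}--\eqref{e432} of $\tilde{h}$ down to $h$, obtaining parity of $h$ in $y_2,\dots,y_N$ together with a Kelvin-type invariance. Combined with the fact that each $v_l = \partial_{y_l}U$ is odd in $y_l$, this yields $\int_{\mathbb{R}^N} h\,v_l = 0$ for $l=2,\dots,N$ immediately; the vanishing $\int h\,v_1 = 0$ follows by the change of variables $z = |y|^{-2}y$ used in \eqref{e419}. The remaining condition $\int h\,v_{N+1} = 0$ is precisely $c_{N+1}(\delta) = 0$, which Lemma~\ref{l42} secures by the choice $\delta = \delta_0$, and under which Lemma~\ref{l43} will be invoked.

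With these orthogonalities in hand, Lemma~\ref{l31} yields a unique $\phi \in \mathcal{D}^{s,2}(\mathbb{R}^N)$ solving $L_0\phi = -h$ with $\int \phi\,U^{p-1}v_l = 0$ for every $l$ and with $\|\phi\|_{*}\le C\|h\|_{**}$. I would then absorb the cut-off defect $pU^{p-1}(\zeta_1^{\mu}-1)\phi$ by a Neumann-series / contraction argument, using that the associated operator, viewed as a map from the $\|\cdot\|_{*}$ norm into the $\|\cdot\|_{**}$ norm, has operator norm of order $(k\mu)^{4s}\to 0$. Undoing the rescaling via $\tilde{\phi}(y) = \mu^{-\frac{N-2s}{2}}\phi(\mu^{-1}(y-\xi_1))$ produces both the solution $\tilde{T}(\tilde{h})$ and the stated estimate, and the symmetries \eqref{e42}--\eqref{e43} follow from applying the uniqueness part of Lemma~\ref{l31} to $\phi$, to its coordinate reflections, and to its Kelvin transform, each of which solves the same equation with the same symmetric data and therefore coincides with $\phi$. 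The delicate point is this quantitative control of the cut-off perturbation uniformly in $k$; this is where the smallness $k\mu \to 0$ enters essentially, and is the step I would expect to require the most careful book-keeping.
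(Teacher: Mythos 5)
The main gap is your one–line treatment of the orthogonality $\int_{\mathbb{R}^N} h\,v_1=0$. The change of variables $z=|y|^{-2}y$ in \eqref{e419} is a \emph{dilation} argument: it yields the $v_{N+1}$ orthogonality, not the $v_1$ one. In Lemma \ref{l41} the $l=1,2$ conditions came from the rotational symmetry \eqref{e415}, and precisely that symmetry is unavailable here because $\xi_1$ has been singled out (the paper states explicitly that $\tilde h$ fails \eqref{e415}, which is why $\tilde T$ must be rebuilt). The paper's actual argument is the delicate step of the proof: it introduces the translated--bubble family $I(t)=\int_{\mathbb{R}^N} w_\mu(y-t\xi_1)\tilde h(y)\,dy$, computes $I'(1)=-\frac{\sqrt{1-\mu^2}}{\mu}\int h v_1$ directly, then uses the Kelvin invariance \eqref{e432} of $\tilde h$ to rewrite $I(t)=\int w_{\mu(t)}(y-s(t)\xi_1)\tilde h\,dy$ with $\mu(t)=\mu/(\mu^2+t^2|\xi_1|^2)$, $s(t)=t/(\mu^2+t^2|\xi_1|^2)$; differentiating this second representation produces \emph{both} a dilation component $2\mu^2\int \tilde v_{N+1}\tilde h$ and a translation component, and only after the dilation term is killed by the already secured identity $\int \tilde v_{N+1}\tilde h=0$ (Lemma \ref{l42} and the choice of $\delta_0$) does comparison of the two expressions for $I'(1)$ force $\int h v_1=0$. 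Your proposal neither supplies this mechanism nor records that the $v_1$ condition is logically downstream of the $v_{N+1}$ one; as written, invoking \eqref{e419} for $v_1$ does not work.

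The second gap is structural: $\tilde h=\zeta_1E+\mathcal N(\phi_1)$ is not given data, since $\mathcal N(\phi_1)$ contains $\tilde\phi_1$ itself, the coupling term $\psi(\phi_1)=\Psi(\phi_1)$ from Lemma \ref{l41}, and the quadratic remainder $N$; hence \eqref{e433} is a nonlinear fixed--point problem in $\tilde\phi_1$. The only contraction in your plan is the one absorbing the cut-off defect $pU^{p-1}(1-\zeta_1^\mu)\phi$, which corresponds to the paper's single term $f_2$ and is indeed small of order $(k\mu)^{4s}$. The paper must in addition estimate $f_1=p\zeta_1(U_1^{p-1}-|U_\ast|^{p-1})\tilde\phi_1$, $f_3=-p\zeta_1U_\ast^{p-1}\psi(\phi_1)$, $f_4=\zeta_1N(\sum_j\tilde\phi_j+\psi(\phi_1))$ and $f_5=\zeta_1E$ in the $\|\cdot\|_{\ast\ast}$ norm, and then run a second Banach fixed--point argument in a ball of $X$, together with the Lipschitz bound $\|\mathcal M(\hat\phi_1)-\mathcal M(\hat\phi_2)\|_\ast\leq\lambda\|\hat\phi_1-\hat\phi_2\|_\ast$, to obtain the solution $\tilde\phi_1$ and its size; your proposal omits this entirely, so the existence asserted in the lemma is not reached. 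The remaining ingredients of your plan (rescaling to the standard bubble, parity giving $\int hv_l=0$ for $l=2,\dots,N$, Lemma \ref{l42} giving $l=N+1$, and symmetries via uniqueness in Lemma \ref{l31} applied to reflections and the Kelvin transform) do match the paper.
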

\begin{proof}
  By Lemma \ref{l42}, we get
  $$\int_{\mathbb{R}^N} hv_{N+1}=0.$$
  Now the oddness of $v_2,v_3,\cdots,v_N $ and the evenness of $h$ imply
  $$\int_{\mathbb{R}^N} hv_i=0,\ \forall i=2,3, \cdots, N.$$
  Next, we only need to show that $\int_{\mathbb{R}^N} hv_1=0$.  Set
  $$I(t):=\int_{\mathbb{R}^N} w_{\mu}(y-t\xi_1)\tilde{h}(y)dy,$$
 where  $$w_{\mu}(y)=\mu^{-\frac{N-2s}{2}}U\left(\mu^{-1}y\right)$$
Then by taking  the derivative of $I(t)$, we have
\begin{align}\label{e434}
  I'(1) &=-\int_{\mathbb{R}^N} \frac{\xi_1}{\mu}\cdot \nabla U_1\left(\mu^{-1}(y-t\xi_1)\right) h\left(\mu^{-1}(y-\xi_1)\right)\cdot \mu^{-\frac{N-2s}{2}} \cdot \mu^{-\frac{N+2s}{2}}dy \bigg|_{t=1}.\nonumber\\
   & =-\frac{\sqrt{1-\mu^2}}{\mu}\int_{\mathbb{R}^N}  U_1(y)h(y)dy\nonumber\\
   &=-\frac{\sqrt{1-\mu^2}}{\mu}\int_{\mathbb{R}^N} hv_1.
\end{align}
  By  making a  transformation $y\rightarrow z=\frac{y}{|y|^2}$, we obtain
\begin{align}\label{e435}
  I(t):&=\int_{\mathbb{R}^N} w_{\mu}(y-t\xi_1)|y|^{-(N+2s)}\tilde{h}(|y|^2y)dy \nonumber \\
  & = \int_{\mathbb{R}^N} w_{\mu}(|y|^{-2}y-t\xi_1)|z|^{-(N-2s)}\tilde{h}(z)dz\nonumber \\
  &= \int_{\mathbb{R}^N}\left(\frac{\mu}{\mu^2+t^2|\xi_1|^2}\right)^{\frac{N-2s}{2}}
   c_{n,s}^{\frac{N-2s}{4s}}\cdot \left[\left|y-\frac{t\xi_1}{\mu^2+t^2|\xi_1|^2}\right|^2
   +\frac{\mu^2}{(\mu^2+t^2|\xi_1|^2)^2}\right]^{-\frac{N-2s}{2}}\tilde{h}(y)dy\nonumber \\
   &= \int_{\mathbb{R}^N} w_{\mu(t)}(y-s(t)\xi_1)\tilde{h}(y)dy,
\end{align}
where $$\mu(t)= \frac{\mu}{\mu^2+t^2|\xi_1|^2},\ s(t)=\frac{t}{\mu^2+t^2|\xi_1|^2}.$$
Through taking  the derivative  on both side of \eqref{e435}, one has

\begin{align}\label{e436}
  I'(1)&=\left[\int_{\mathbb{R}^N} \mu'(t)\frac{\partial(w_{\mu}(y-s(t)\xi_1))}{\partial{\mu}}\tilde{h}(y)dy
  -\xi_1^1\int_{\mathbb{R}^N} s'(t)\frac{\partial( w_{\mu(t)}(y-s(t)\xi_1))}{\partial{y_1}}\tilde{h}(y)dy\right]\Bigg|_{t=1} \nonumber\\
  &=2\mu^2 \int_{\mathbb{R}^N} \tilde{v}_{N+1}(y)\tilde{h}(y)dy-\frac{(2\mu^2-1)\sqrt{1-\mu^2}}{\mu}\int_{\mathbb{R}^N} hv_1\nonumber\\
  &=\frac{(2\mu^2-1)\sqrt{1-\mu^2}}{\mu}\int_{\mathbb{R}^N} hv_1.
 \end{align}
So \eqref{e434} and \eqref{e436} imply that
$$I'(1)=-\frac{\sqrt{1-\mu^2}}{\mu}\int_{\mathbb{R}^N} hv_1=\frac{(2\mu^2-1)\sqrt{1-\mu^2}}{\mu}\int_{\mathbb{R}^N} hv_1.$$
Obviously, this  shows that $$\int_{\mathbb{R}^N} hv_1=0.$$

Now Lemma \ref{l31} implies that Eq. \eqref{e433} has   a unique solution $\tilde{\phi}:=\tilde{T}(\tilde{h})$ which is even with respect to each of the variables
$y_2, y_3,\cdots, y_{N}$ satisfying
$$\|\tilde{\phi}\|_{\ast}=\|\tilde{T}(\tilde{h})\|_{\ast}\leq C\|\tilde{h}\|_{\ast\ast}.$$
But $\tilde{h}$ in Eq. \eqref{e433} does not satisfy  the property \eqref{e415}, so the discussion of  Lemma \ref{l41} can not be applied to obtain the existence directly. We must  prove that the operator $\tilde{T}$ is a contraction mapping again.

In the following, for convenience  sake, the term $\zeta_1E+\mathcal{N}(\phi_1)$  is spilt  into five terms. And we estimate these terms one by one.
Set
$$f_1:=p\zeta_1(U_1^{p-1}-|U_{\ast}|^{p-1})\cdot \tilde{\phi}_1,\ \ \ f_2:=p(1-\zeta_1)U_1^{p-1}\tilde{\phi}_1,$$
$$f_3:=-p\zeta_1U_{\ast}^{p-1}\psi(\phi_1),\ \  \  f_4:=\xi_1N(\sum_{j=1}^k\tilde{\phi}_j+\psi(\phi_1)),\  \ \ f_5:=\zeta_1E,$$
and $$\tilde{f}_i(y)=\mu^{\frac{N+2s}{2}}f_i(\xi_1+\mu y), \ \  i=1,\cdots,5.$$
Then $$\tilde{h}=\sum_{i=1}^5 f_i.$$
Due to the cut-off function $\zeta_1$, we see that
$$\mbox{supp} f_j\subset \{y: |y-\xi_1|<\eta/k\}=:IN_1\subset IN,\ \ j=1,3,4,5.$$
For $f_1$, we get
\begin{align*}
  |\tilde{f}_1| &\leq \left|p\left|U(y)+\sum_{j=2}^kU(y+\mu^{-1}(\xi_1-\xi_j))-
  \mu^{\frac{N-2s}{2}}U(\xi_1+\mu y)\right|^{p-1}-pU^{p-1}(y)\right|\cdot |\phi_1(y)| \\
   & \leq  C\left|\sum_{j=2}^kU(y+\mu^{-1}(\xi_1-\xi_j))+
  \mu^{\frac{N-2s}{2}}U(\xi_1+\mu y)+U(y)\right|^{p-2}\\
 & \ \ \ \cdot |\mu^{\frac{N-2s}{2}}U(\xi_1+\mu y)+U(y)|\cdot U(y)\cdot \|\phi_1\|_{\ast} \\
  &\leq CU^{p-1}(y)  \mu^{\frac{N-2s}{2}}\|\phi_1\|_{\ast}  \leq C\frac{ \mu^{\frac{N-2s}{2}}}{1+|y|^{4s}}\|\phi_1\|_{\ast}.
\end{align*}
Hence we take the the same argument of Step 2 in Lemma \ref{l22} and infer
\begin{equation}\label{e437}
   \|f_1\|_{\ast\ast}=\|f_1\|_{\ast\ast(IN_1)}\leq
              C \|\phi_1\|_{\ast}k^{1+s-\frac{N}{2}-\frac{N}{q}}.
\end{equation}
For $f_2$, we see that
$$|\tilde{f}_2(y)|=|\zeta_1(\mu y+\xi_1)-1|\cdot U^{p-1}\cdot |\phi_1|\leq C|U|^p\|\phi_1\|_{\ast}.$$
Thus,  \begin{align}\label{u1}
         \|f_2\|_{\ast\ast} & \leq C\left[\int_{|y-\xi_1|>\eta/k}(1+|y|)^{(N+2s)q-2N}\mu^{-\frac{N+2s}{2}q}
         \left|\tilde{f}_2^q\left(\frac{y-\xi_1}{\mu}\right)\right|dy\right]^{1/q}
          \nonumber\\
        & \leq C\left[\mu^{\frac{q(N+2s)}{2}}\cdot\left( \int_{\eta/k}^{1} r^{N-1-(N-2s)pq}dr+\int_{1}^{\infty} r^{(N+2s)q-2N-(N-2s)pq+N-1}dr\right)\right]^{1/q}\|\phi_1\|_{\ast} \nonumber\\
        &\leq C\mu^{ \frac{N+2s}{2}} k^{(N+2s)-\frac{N}{q}}\|\phi_1\|_{\ast}+C\mu^{ \frac{q(N+2s)}{2}}\|\phi_1\|_{\ast} <Ck^{ -\frac{3(N+2s)}{2}}\|\phi_1\|_{\ast}.
      \end{align}

Analogously, applying the estimate of $\psi$ in Lemma \ref{l41}, we have
\begin{align*}
  |\tilde{f_3}|& \leq CU^{p-1}\mu^{\frac{N-2s}{2}} \|\psi(\phi_1)\|_{\infty} \\
   & \leq CU^{p-1}\mu^{\frac{N-2s}{2}} \|\psi(\phi_1)\|_{\ast}\\
   &\leq C\mu^{\frac{N-2s}{2}}\frac{1}{1+|y|^{4s}} (k^{1+s-\frac{N}{2}-\frac{N}{q}}+\|\phi_1\|_{\ast}^2).
\end{align*}
and
\begin{equation}\label{e438}
\|f_3\|_{\ast\ast} \leq
               Ck^{-\frac{N}{q}-2s}(k^{1+s-\frac{N}{2}-\frac{N}{q}}+\|\phi_1\|_{\ast}^2).
\end{equation}
Now, for $f_4$, noting that
$$\tilde{N}=|V_{\ast}+\hat{\phi}|^{p-1}(V_{\ast}+\hat{\phi})-|V_{\ast}|^{p-1}V_{\ast}-p|V_{\ast}|^{p-1}\hat{\phi}.$$
where $\hat{\phi}(y):=\mu^{\frac{N-2s}{2}}\phi (\xi_1+\mu y)$, and
$$V_{\ast}(y)=U(y)+\sum_{j=2}^kU(y+\mu^{-1}(\xi_1-\xi_j))-\mu^{\frac{N-2s}{2}}U(\xi_1+\mu y).$$
So, for $\phi=\sum_{j=1}^k\tilde{\phi}_j+\psi(\phi_1)$, we have
$$|\tilde{f}_4(y)|\leq C U^{p-1}\mu^{\frac{N-2s}{2}}\left[\|\phi_1\|_{\ast}+(\|\phi_1\|_{\ast}^2+k^{1+s-\frac{N}{2}-\frac{N}{q}})\right],$$
thus
\begin{equation}\label{e439}
\|f_4\|_{\ast\ast} \leq
               Ck^{-\frac{N}{q}-2s}\left[\|\phi_1\|_{\ast}+(\|\phi_1\|_{\ast}^2+k^{1+s-\frac{N}{2}-\frac{N}{q}})\right].
\end{equation}
It follows from the estimate of the error term $E$ that
\begin{equation}\label{e440}
\|f_5\|_{\ast\ast} \leq
               Ck^{1+s-\frac{N}{2}-\frac{N}{q}}.
\end{equation}
Combining the obtained estimates for $f_1, \cdots, f_5$, for all $\hat{\phi}, \hat{\phi}_1,  \hat{\phi}_2\in B_{\rho}(0)\subset X$, we have

\begin{equation}\label{e443}
\|\mathcal{M}(\hat{\phi})\|_{\ast}\leq C\sum_{i=1}^{5}\|f_{i=1}(\hat{\phi})\|_{\ast\ast} \leq
               Ck^{-\frac{N}{q}-2s}(\|\phi_1\|_{\ast}+\|\hat{\phi}\|_{\ast}^2),
\end{equation}
and
\begin{align*}
 \|\mathcal{M}(\hat{\phi_1})-\mathcal{M}(\hat{\phi_2})\|_{\ast}&\leq C\sum_{i=1}^5\|f_i(\hat{\phi}_1)-f_i(\hat{\phi}_2)\|_{\ast\ast}  \\
& \leq  Ck^{-\frac{N}{q}-2s}(\|\hat{\phi}_1\|_{\ast}+\|\hat{\phi}_2\|_{\ast})\|\hat{\phi}_1-\hat{\phi}_2\|_{\ast} \\
&=: \lambda\|\hat{\phi}_1-\hat{\phi}_2\|_{\ast}, \  \mbox{ with}\ \ \lambda<1.
\end{align*}
Hence M is a contraction mapping from $B_\rho(0)$ to $ B_\rho(0)$, for $k$ large enough. By
the Banach fixed point theorem, there exists a unique solution $\tilde{\phi}_1$ of Eq. (\ref{e433}).
\end{proof}
\section{Proof of main result } %
We assume that the  original problem (\ref{P2}) admits a solution of the form:  $$U=U_{\ast}(y)+\phi(y).$$
Then problem (\ref{P2}) is  converted into  Eq. \eqref{e3}.
Through introducing the cut-off functions, and assuming that $\phi=\sum_{j=1}^k \tilde{\phi}_j+\psi$, Eq. \eqref{e3} is turned
 into a system of equations of $\tilde{\phi}_j, j=1,2,\cdots,k,$ and $\psi$ (see \eqref{e45}). Hence, we only need to prove
  the existences of  $\tilde{\phi}_j, j=1,2,\cdot,k,$ and $\psi$, which are done in Section 4 by Banach fixed point theorem. So for $k>k_0$, the sign-changing solutions
 $u_k=U_{\ast}(y)+\sum_{j=1}^k \tilde{\phi}_j+\psi$  for problem(\ref{P2})   are obtained.

In short, the outline of our proofs  is as follows:
\begin{align*}
  &\mbox{Eq.}(\ref{P2}) \xLeftrightarrow{u=U_{\ast}+\phi}
   \mbox{Eq.}(\ref{e3})  \xLeftrightarrow{\phi=\sum_{j=1}^k \tilde{\phi}_j+\psi}\mbox{Eq.}(\ref{e45})
    \xLeftrightarrow{\quad}\\
   &\left\{\begin{array}{l}
   \mbox{Eq.}(\ref{e46})\  \ \mbox{unique existence} \Leftarrow \mbox{Banach fixed point theorem;}  \\
 \mbox{Eq.}(\ref{e433})\ \   \mbox{unique existence} \Leftarrow \mbox{Banach fixed point theorem.}
  \end{array}
   \right.
\end{align*}

The proof is completed.


\begin{thebibliography}{99}

 \bibitem{r9}
 D. Applebaum,  L\'{e}vy processes―from probability to finance and quantum groups,  Not. Am. Math. Soc. 51 (2004)
1336-1347.
 \bibitem{r10}
 D. Applebaum,  L\'{e}vy Processes and Stochastic Calculus, Cambridge Studies in Advanced Mathematics
vol 116,  Cambridge University Press, (2009).










 \bibitem{r12}
 X. Cabr\'{e}, J.G. Tan,  Positive solutions of nonlinear problems involving the square root of the Laplacian
Adv. Math. 224 (2010) 2052-2093.


 \bibitem{r1}
L. Caffarelli, T.L. Jin, Y. Sire, J.G. Xiong, Local analysis of solutions of fractional semi-linear elliptic equations with isolated singularities,
http://arxiv.org/abs/1309.2021.

\bibitem{r13}
L. Caffarelli, J.M. Roquejoffre, Y.  Sire,   Variational problems for free boundaries for the fractional
Laplacian J. Eur. Math. Soc. 12 (2010) 1151-1179.

\bibitem{r14}
L.  Caffarelli L, S. Salsa, L.  Silvestre L,  Regularity estimates for the solution and the free boundary of the
obstacle problem for the fractional Laplacian,  Invent. Math. 171 (2008) 425-461.

\bibitem{r15}
L.  Caffarelli L, L. Silvestre,  An extension problem related to the fractional Laplacian Commun. Part. Diff.
Eqns 32 (2007) 1245-1260
\bibitem{r16}
Capella A, Dacila J, Dupaigne L and Sire Y 2011 Regularity of radial extremal solutions for some nonlocal
semilinear equations Commun. Partial Diff. Eqns 36 1353-1384

\bibitem{r17}
S.Y.A. Chang, M. Gonz\'{a}lez,  Fractional Laplacian in conformal geometry, Adv. Math. 226 (2011) 1410-1432.
\bibitem{r11}
X. Chang, Z.Q. Wang, Ground state of scalar field equations involving a fractiona lLaplacian with genera lnonlinearity,
Nonlinearity 26 (2013) 479-494.

 \bibitem{r21}
W. Chen, C. Li,  B. Ou, Classification of solutions for an integral equation, Comm. Pure Appl.
Math., 59 (2006) 330-343.


 \bibitem{r24}
V. Coti-Zelati,  M. Nolasco,  Existence of ground states for nonlinear, pseudorelativistic Schr¨odinger
equations, Rend. Lincei Mat. Appl. 22 (2011) 51-72.


 \bibitem{r23}
  J. D\'{a}vila, M. Del Pino, J. Wei, Concentrating standing waves for fractional nonlinear Schr¨odinger
equation, J. Differerntial Equations, 256 (2014) 858-892.

 \bibitem{r4}
J. D\'{a}vila, M. del Pino, Y. Sire, Non degeneracy of the bubble in the critical case for non local equations, Proc. Amer. Math. Soc. 141 (11)
(2013) 3865-3870.

 \bibitem{r2}
M. del Pino, M. Musso, F. Pacard, A. Pistoia, Large energy entire solutions for the Yamabe equation, J. Differential Equations
251 (9) (2011) 2568-2597.


 \bibitem{r25}
P. Felmer, A. Quaas, J.G. Tan, Positive solutions of nonlinear Schr¨odinger equation with the fractional
Laplacian, Proc. Roy. Soc. Edinburgh Sect., A 142(6) (2012) 1237-1262.


 \bibitem{r26}
R.  Frank, E. Lenzmann, Uniqueness and nondegeneracy of ground states
for $(-\Delta)^s Q + Q -Q^{\alpha+1} = 0$  in $\mathbb{R}^N$, Acta Math., 210 (2013) 261-318.


\bibitem{r18}
Y.X. Guo, B.Li, J. C. Wei, Large energy entire solutions for the Yamabe type problem of polyharmonic operator,
 J. Differential Equations 254 (2013) 199-228.

 \bibitem{r7}
T. Jin, Y.Y. Li, J.G. Xiong, On a fractional Nirenberg problem, part I: blow up analysis and compactness of solutions, http://arxiv.org/abs/1111.1332.
 \bibitem{r8}
T. Jin, Y.Y. Li, J.G. Xiong, On a fractional Nirenberg problem, part II: existence of solutions, http://arxiv.org/abs/1309.4666.

 \bibitem{r20}
Y. Y. Li, Remark on some conformally invariant integral equations: the method of moving spheres,
J. Eur. Math. Soc. (JEMS), 6 (2004)  153-180.

 \bibitem{r19}
 W. Long, S.J. Peng,  J. Yang, Infinitely many positive solutions for nonlinear fractional
 schr\"{o}dinger equations, http://arxiv.org/abs/1402.1902v1.


 \bibitem{r5}
X. Ros-Oton, J. Serra, The extremal solution for the fractional Laplacian, Calc. Var. Partial
Differential Equations, DOI 10.1007/s00526-013-0653-1.

 \bibitem{r27}
S. Secchi, Ground state solutions for nonlinear fractional Schr\"{o}dinger equations in $\mathbb{R}^N$, J. Math.
Phys., 54 (2013), 031501, 17 pp.


 \bibitem{r22}
L. Silvestre, Regularity of the obstacle problem for a fractional power of the Laplace operator Commun.
Pure Appl. Math. 60 (2006) 67-112.

 \bibitem{r6}
E. Stein, Singular Integrals and Differentiability Properties Of Functions, Princeton Mathematical Series,
vol. 30. Princeton  University Press, Priceton (1970).








 \bibitem{r3}
J. Wei, S. Yan, Infinitely many solutions for the prescribed scalar curvature problem on $\mathbb{S}^n$, J. Funct. Anal. 258 (2010)
3048-3081.








\end{thebibliography}
 \end{document}